\newtheorem{remark}{Remark}[section]
\newcommand{\norm}[1]{\left\Vert#1\right\Vert}
\newcommand{\norml}[2]{\left\Vert#1\right\Vert_{L^2(#2)}}
\newcommand{\norme}[1]{\left\Vert{\hskip -3pt}\left\vert #1 \right\vert{\hskip -3pt}\right\Vert_{1,h}}
\newcommand{\abs}[1]{\bigl\vert#1\bigr\vert}
\newcommand{\pd}[1]{\left\langle #1\right\rangle}
\newcommand{\pdaj}[2]{\left\langle \left\{#1\right\}, \left[#2\right]\right\rangle_e}
\newcommand{\pdja}[2]{\left\langle \left[#1\right], \left\{#2\right\}\right\rangle_e}
\newcommand{\pdjj}[2]{\left\langle \left[#1\right], \left[#2\right]\right\rangle_e}
\newcommand{\set}[1]{\left\{#1\right\}}
\newcommand{\av}[1]{\left\{#1\right\}}
\newcommand{\jm}[1]{\left[#1\right]}
\newcommand{\tuh}{\tilde{u}_h}
\newcommand{\csta}{C_{\rm sta}}
\newcommand{\db}{\displaybreak[0]}
\newcommand{\nn}{\nonumber}
\newcommand{\al}{\alpha}
\newcommand{\be}{\beta}
\newcommand{\De}{\Delta}
\newcommand{\ep}{\varepsilon}
\newcommand{\ga}{\gamma}
\newcommand{\Ga}{\Gamma}
\newcommand{\la}{\lambda}
\newcommand{\na}{\nabla}
\newcommand{\om}{\omega}
\newcommand{\Om}{\Omega}
\newcommand{\pa}{\partial}
\newcommand{\pr}{\prime}
\newcommand{\si}{\sigma}
\renewcommand{\i}{{\rm\mathbf i}}
\DeclareMathOperator{\re}{{Re}}
\DeclareMathOperator{\im}{{Im}}
\newcommand{\br}{\mathbf{R}}
\newcommand{\bR}{\mathbf{R}}
\newcommand{\p}{\partial}
\newcommand{\Ome}{\Omega}
\newcommand{\nab}{\nabla}
\newcommand{\Del}{\Delta}
\newcommand{\cT}{\mathcal{T}}
\newcommand{\cE}{\mathcal{E}}
\newcommand{\loc}{{\rm loc}}
\newcommand{\Langle}{\left\langle}
\newcommand{\Rangle}{\right\rangle}
\def\jump#1{[#1]}
\def\Jump#1{\left[#1\right]}
\def\avrg#1{\{#1\}}
\title{Discontinuous Galerkin Methods for the Helmholtz Equation with Large
Wave Number}
\author{
Xiaobing Feng\thanks{Department of Mathematics, The University of
Tennessee, Knoxville, TN 37996, U.S.A.  ({\tt xfeng@math.utk.edu}).
The work of this author was partially supported by the NSF
grants DMS-0410266 and DMS-0710831.}
\and
Haijun Wu%\footnotemark[2]
\thanks{Department of Mathematics, Nanjing University, Jiangsu,
210093, P.R. China. ({\tt hjw@nju.edu.cn}). The work of this author was
partially supported by the national basic research program of China
under grant 2005CB321701 and by the program for the new century outstanding
talents in universities of China. Most part of this joint work was carried out
during the author's recent visit of the University of Tennessee, the author
would like to thank the host institution for its hospitality and
financial support of the visit.}}
\begin{document}

\maketitle

%\vspace{-1.4in}
%\slugger{sinum}{200x}{xx}{x}{xxx--xxx}
%\vspace{1.4in}

\setcounter{page}{1}

\large
\begin{abstract}
This paper develops and analyzes some interior penalty discontinuous Galerkin
methods using piecewise linear polynomials for the Helmholtz equation with
the first order absorbing boundary condition in the two and three dimensions.
It is proved that the proposed discontinuous Galerkin methods are stable (hence
well-posed) without any mesh constraint. For each fixed wave number $k$,
optimal order (with respect to $h$) error estimate in the broken $H^1$-norm and
sub-optimal order estimate in the $L^2$-norm are derived without any mesh 
constraint. The latter estimate improves to optimal order when the mesh size $h$ 
is restricted to the preasymptotic regime  (i.e., $k^2 h \gtrsim 1$). 
Numerical experiments are also presented to gauge the theoretical result and
to numerically examine the pollution effect (with respect to $k$) in the error
bounds. The novelties of the proposed interior penalty discontinuous 
Galerkin methods include: first, the methods penalize not only 
the jumps of the function values
across the element edges but also the jumps of the normal and tangential
derivatives; second, the penalty parameters are taken as complex numbers
of positive imaginary parts so essentially and practically no constraint is
imposed on the penalty parameters.
Since the Helmholtz problem is a non-Hermitian and indefinite linear
problem, as expected, the crucial and the most difficult
part of the whole analysis is to establish the stability estimates (i.e.,
a priori estimates) for the numerical solutions. To the end, the cruxes
of our analysis are to establish and to make use of a local
version of the Rellich identity (for the Laplacian) and to mimic
the stability analysis for the PDE solutions given in
\cite{cummings00,Cummings_Feng06,hetmaniuk07}.
\end{abstract}

\begin{keywords}
Helmholtz equation, time harmonic waves, absorbing boundary conditions,
discontinuous Galerkin methods, error estimates
\end{keywords}

\begin{AMS}
65N12, %Stability and convergence of numerical methods
65N15, %Error bounds
65N30, %Finite elements, Rayleigh-Ritz and Galerkin methods, finite methods
78A40  %Wave and radiation
\end{AMS}

%%%%%%%%%%%%%%

\section{Introduction}\label{sec-1}
Wave is ubiquitous, it arises in many branches of science, engineering
and industry (cf. \cite{ck92,ked08} and the references therein).
It is significant to geoscience, petroleum engineering,
telecommunication, and defense industry. Mathematically, wave
propagation problems are described by hyperbolic partial differential equations,
and the progress of solving wave-related application problems has largely
depended on the progress of developing effective methods and algorithms to
solve their governing partial differential equations. Among many wave-related
application problems, those dealing with high frequencies (or large wave numbers)
wave propagation are most difficult to solve numerically because they
are strongly indefinite and non-Hermitian and their solutions
are very oscillatory. These properties in turn make it very
difficult to construct stable numerical schemes under practical
mesh constraints. Furthermore, high frequency (or large wave number)
requires to use very fine meshes in order to resolve highly oscillatory waves,
and the use of fine meshes inevitably gives rise huge, strongly indefinite,
ill-conditioned, and non-Hermitian (algebraic) systems to solve. All these
difficulties associated with high frequency (large wave number) wave
computation still remain to be resolved and are not mathematically well
understood in high dimensions (cf. \cite{zienkiewicz00}), although
considerable amount of progresses have been made in the past thirty
years (cf. \cite{er03,ihlenburg98,ked08} and the references therein).

The simplest prototype wave scattering problem is the following
acoustic scattering problem (with time dependence $e^{\i\om t}$):
\begin{alignat}{2}\label{e0.1}
-\Del u- k^2 u &= f \qquad &&\mbox{in } \bR^d\setminus D,\\
u &=0&&\mbox{on } \p D, \label{e0.2} \\
\sqrt{r}\Bigl( \frac{\p (u-u^{\rm inc})}{\p r} + \i k (u-u^{\rm inc})\Bigr) &\rightarrow 0 &&\mbox{as }
r=|x|\rightarrow \infty, \label{e0.3}
\end{alignat}
where $D\subset\bR^d \,(d=2,3)$, a bounded Lipschitz domain, denotes the
scatterer and
$\i=\sqrt{-1}$ denotes the imaginary unit. $k\in\mathbf{R}$ is a given
positive (large) number and known as the wave number. $u^{\rm inc}$ is the incident wave.
Equation \eqref{e0.1} is the well-known Helmholtz equation and condition
\eqref{e0.3} is the Sommerfeld radiation condition at the infinity.
Boundary condition \eqref{e0.2} implies that the scatterer is sound-soft.

To compute the solution of the above problem, due to (finite) memory and
speed limitations of computers, one needs first to formulate the problem
as a finite domain problem. Two major
approaches have been developed for the task in the past thirty years.
The first approach is {\em boundary integral methods} (cf. \cite{yu02}
and the references therein) and the other one
is {\em artificial boundary condition methods} (cf. \cite{em79,berenger94}
and the references therein). In the case of boundary integral methods,
one converts the original differential equation into a (complicate) boundary
integral equation on the boundary $\pa D$ of the scatterer $D$.
Clearly, the trade-off is that the original simple differential equation
could not be retained in the conversion. On the other hand, artificial
boundary condition methods solve the given differential equation on
a truncated computational domain by imposing suitable artificial
boundary conditions on the outer boundary of the computational domain.
Artificial boundary condition methods can be
divided into two groups. One group of the methods use sharp artificial
boundaries (i.e., the boundary has zero width), appropriate boundary
conditions, which are called ``{\em absorbing boundary conditions}", then are
imposed on the boundaries (cf. \cite{em79,feng99}). The second group of
artificial boundary condition methods allow the artificial boundaries to have
non-zero width, such fatten boundaries are called absorbing layers,
where the artificial boundary conditions are usually constructed in
the form of differential equations which replace the original wave
equations in the absorbing layers.  The methods of this second
group are called ``{\em PML (perfectly matched layer) methods}"
(cf. \cite{berenger94}).  In this paper, we shall adopt the absorbing boundary
condition approach for problem \eqref{e0.1}--\eqref{e0.3}. Extension of the
results of this paper to the PML formulations will be given elsewhere.

Specifically, in this paper we consider the following Helmholtz problem:
\begin{alignat}{2}
-\Del u - k^2 u &=f  &&\qquad\mbox{in  } \Ome:=\Ome_1\setminus D,\label{e1.1}\\
u &=0 &&\qquad\mbox{on } \Ga_D,\label{e1.3}\\
\frac{\pa u}{\pa n_\Om} +\i k u &=g &&\qquad\mbox{on } \Ga_R,\label{e1.2}
\end{alignat}
where $(D\subset)\, \Ome_1\subset \br^d,\, d=2,3$ is a polygonal/polyhedral
domain, which is often taken as a $d$-rectangle in applications.
$\Ga_R:=\pa\Ome_1, \Ga_D=\pa D$, hence, $\pa\Ome=\Ga_R\cup \Ga_D$. $n_\Om$
denotes the unit outward normal
to $\pa\Om$. The Robin boundary condition \eqref{e1.2} is known as the
first order absorbing boundary condition (cf. \cite{em79}).
We remark that the case $D=\emptyset$ also arises in applications
either as a consequence of frequency domain treatment of waves or
when time-harmonic solutions of the scalar wave equation are sought
(cf. \cite{dssb93,dss94}).

For many years, the finite element method (and other type methods)
has been widely used to discretize the Helmholtz equation (\ref{e1.1})
with various types of boundary conditions (cf. \cite{amm06,aldr06,
ak79,aw80,bao95,cw03,cm87b,cummings00,dss94,er03,goldstein81,ib95a,ihlenburg98,
ked08,op98} and the reference therein).
It is well known that in every
coordinate direction, one must put some minimal number of grid points
in each wave length $\ell=2\pi/k$ in order to resolve the wave, that is,
the mesh size $h$ must satisfy the constraint $hk\lesssim 1$. In practice,
$6-10$ grid points are used in a wave length, which is often referred
as the ``rule of thumb". However, this ``rule of thumb" was proved
rigorously not long ago by Babu{\v{s}}ka {\em et al} \cite{ib95a}
only in the one-dimensional case (called the {\em preasymptotic
error analysis}). The main difficulty of the analysis is caused by
the strong indefiniteness of the Helmholtz equation which in turn
makes it hard to establish stability estimates for the finite element
solution under the ``rule of thumb" mesh constraint.
In \cite{ib95a}, Babu{\v{s}}ka {\em et al} also showed that
the $H^1$-error bound for the finite element solution contains a pollution term
that is related to the loss of stability with large wave numbers. Later,
Babu{\v{s}}ka {\em et al} addressed the question whether it is possible
to reduce the pollution effect in a series of papers
(cf. Chapter 4 of \cite{ihlenburg98} and the reference therein).
It should be noted that under the stronger mesh condition 
that $k^2h$ is sufficiently small, optimal (with respect to $h$) 
and quasi-optimal (with respect to $k$)
error estimates for finite element approximations of the
Helmholtz problem were established early by Aziz and Kellogg in \cite{ak79}
and Douglas, {\em et al} in \cite{dssb93, dss94}
using the so-called Schatz argument \cite{schatz74} (also see
Chapter 5 of \cite{bs94}), and a similar result was also obtained in
\cite{goldstein81} using an operator perturbation argument. %due to Mihlin.

The work of \cite{ak79,dssb93,ib95a} shows that in the $1$-d case, due to the
pollution effect, the finite element solution for the Helmholtz
problem (\ref{e1.1})-(\ref{e1.3}) deteriorates as the wave number becomes
large if the practical mesh condition $kh\lesssim 1$ is used. The situation
in the high dimensions is expected to be the same (at least not better) although,
to the best of our knowledge, no such a rigorous analysis is
known in the literature. The detailed analysis of \cite{dssb93,ib95a}
also shows that the pollution effect is inherent in the finite element method
and is caused by the deterioration of stability of the Helmholtz operator
as the wave number $k$ becomes large.
In order to minimize or eliminate (if possible) the pollution and to obtain
more stable and more accurate approximate solutions for Helmholtz-type
problems with large wave numbers, various nonstandard and generalized Galerkin
methods have been proposed lately in the literature. These methods can be categorized
into two groups. The first group of methods use nonstandard or stabilized
discrete variational forms to approximate the Helmholtz operator so that the
resulted discrete problems have better stability properties. Methods in this
group include Galerkin-least-squares finite element methods
\cite{chang90,hh92}, quasi-stabilized finite element methods \cite{bs97},
and discontinuous Galerkin methods \cite{amm06,ce06,perugia07}.
The second group of methods abandon the use of piecewise polynomial
trial and test functions and replace them by global polynomials or non-polynomial
functions. Methods in this group include spectral methods \cite{sw07},
generalized Galerkin/finite element methods \cite{melenk95,bbo04},
partition of unity finite element methods \cite{mb96},
and meshless methods \cite{bbo04}.
We also note that another very different and intensively studied
approach for high frequency wave computation is {\em geometrical optics}, which
studies asymptotic (nonlinear) approximations of the Helmholtz equation
obtained when the frequency (or wave number) tends infinity. We refer the
reader to \cite{er03} and the references therein for some
recent developments in geometrical optics and its variants.

The goal of this paper is to develop some interior penalty
discontinuous Galerkin (IPDG) methods for problem
\eqref{e1.1}--\eqref{e1.3} in high dimensions.  The focus of the paper
is to establish the rigorous stability and error analysis, in particular,
the preasymptotic error analysis.  For the ease of presentation and to
better present ideas, we confine ourselves to only consider the case of
linear element in this paper. Such a restriction is also due to the
consideration that we shall present $hp$-discontinuous Galerkin methods
for problem \eqref{e1.1}--\eqref{e1.3} in a forth coming paper \cite{fw08b}
which extends the work of this paper to high order elements.
Compared with existing DG methods for the Helmholtz equation in the
literature, the novelties of our interior penalty discontinuous Galerkin methods
include the following: First, our mesh-dependent sesquilinear forms penalize not
only the jumps of the function values across the element edges/faces but also the
jumps of the normal and tangential derivatives. Recall that penalizing
the jumps of the normal (and tangential) derivatives helps but is not essential
for the success of IPDG methods in the case of coercive elliptic problems
(cf. \cite{arnold82,dd76}), however, it contributes critically
to the stability of the IPDG methods of this paper.
Second, a small but vitally important idea of this paper is
to take the penalty parameters as complex numbers of positive imaginary parts.
This idea also contributes critically
to the stability of the IPDG methods of this paper. As a result,
essentially and practically no constraint is imposed on the penalty parameters.
Since the Helmholtz problem is a non-Hermitian and an indefinite linear
problem, as expected, the crucial and the most difficult
part of the whole analysis is to establish the stability estimates (i.e.,
a priori estimates) for the numerical solutions. To the end, the cruxes
of our analysis are to establish and to make use of a local
version of the Rellich identity (for the Laplacian) and to mimic
the stability analysis for the PDE solutions given
in \cite{cummings00,Cummings_Feng06,hetmaniuk07}.
Suppose $\Om_1$ is star-shaped with respect to a point $x_{\Om_1}$. 
The key idea is to use the special test function $\nab u_h\cdot (x-x_{\Om_1})$
(defined element-wise), which is a valid candidate for any IPDG method. 
We remark that the same technique was successfully employed by 
Shen and Wang in \cite{sw07} to carry out the stability and error 
analysis for the spectral Galerkin
approximation of the Helmholtz problem.

In the past fifteen years, DG methods have received a lot attentions
and undergone intensive studies by many people. As is well known now,
DG methods have several advantages over other types of numerical methods.
For example, the trial and test spaces are very easy to construct, they can
naturally handle inhomogeneous boundary conditions and curved boundaries;
they also allow the use of highly nonuniform and unstructured meshes,
and have built-in parallelism which permits coarse-grain parallelization.
In addition, the fact that the mass matrices are block diagonal is an
attractive feature in the context of time-dependent problems, especially
if explicit time discretizations are used. We refer to
\cite{arnold82,abcm01,baker77,CKS00,CS98,dd76,fk07,ob00,rwg99,w78}
and the references therein for a detailed account on DG methods for
coercive elliptic and parabolic problems.
In addition to the advantages listed above, the results of this paper also
demonstrate the flexibility and effectiveness of DG methods for
strongly indefinite problems, which was not well understood before.

The remainder of this paper is organized as follows. In Section \ref{sec-2a},
notation and some preliminaries are described and cited. In particular,
the sharp (with respect to $k$) stability constant estimates of
\cite{Cummings_Feng06} for the solution of problem \eqref{e1.1}--\eqref{e1.3}
in high dimensions were recalled. These estimates are critical
for obtaining explicit dependence of the error bounds on the
wave number $k$. In Section~\ref{sec-2},  the IPDG methods of this paper
are formulated. Both symmetric and non-symmetric IPDG methods are constructed.
However, since the Helmholtz equation and its solution are
complex-valued, the non-symmetric terms in the IPDG sesquilinear form
do not cancel each other when two arguments of the form
are taken to be the same function. Instead, their difference is
a pure imaginary quantity. This is a main difference between
non-symmetric IPDG for coercive elliptic problems and for
indefinite Helmholtz type problems. As a result, the penalty parameters
need to be chosen as complex numbers with positive imaginary parts
to ensure the stability in both symmetric
and non-symmetric IPDG methods. Section~\ref{sec-sta} devotes
to stability analysis for the IPDG methods proposed in Section~\ref{sec-2}.
It is proved that the proposed IPDG methods are stable (hence
well-posed) without any mesh constraint.
In Section~\ref{sec-err}, using the stability result of Section~\ref{sec-sta}
we derive optimal order (with respect to $h$) 
error estimate in the broken $H^1$-norm and
sub-optimal order estimate in the $L^2$-norm without any mesh constraint.
The latter estimate improves to optimal order when the mesh size $h$ 
is restricted to the preasymptotic regime (i.e., $k^2 h \gtrsim 1$).    
In particular, for appropriately chosen penalty
parameters, it is shown that the error in the broken $H^1$-norm is bounded by
$\widetilde{C}_1kh + \widetilde{C}_2k^{8/3}h^{4/3}$  if $kh\lesssim 1$.
Numerical tests in Section~\ref{sec-num} suggest that the error in the broken
$H^1$-norm may have a better bound $\widetilde{C}_1kh + \widetilde{C}_2k^{3}h^{2}$
and it is possible to tune the penalty parameters to significantly reduce the
pollution error. We note that in the case $k^2h$ is sufficiently small,
optimal order (with respect to $h$) error estimate in the broken 
$H^1$-norm can be derived by using the Schatz argument as done 
in \cite{perugia07,ak79, dssb93, dss94}.
 In Section \ref{sec-num}, we present some numerical experiments
to gauge our theoretical error estimates, to numerically examine the pollution
effect in the error bounds, and to test the performance of the proposed IPDG methods.

%%%%%%%%%%%%
\section{Notation and Preliminaries} \label{sec-2a}
The standard space, norm and inner product notation
are adopted. Their definitions can be found in \cite{bs94,ciarlet78,baker77}.
In particular, $(\cdot,\cdot)_Q$ and $\langle \cdot,\cdot\rangle_\Sigma$
for $\Sigma\subset \pa Q$ denote the $L^2$-inner product
on {\em complex-valued} $L^2(Q)$ and $L^2(\Sigma)$
spaces, respectively. $(\cdot,\cdot):=(\cdot,\cdot)_\Ome$
and $\langle \cdot,\cdot\rangle:=\langle \cdot,\cdot\rangle_{\p\Ome}$. Let
\[
H_{\Ga_D}^1(\Om):=\set{u\in H^1(\Om): u=0 \text{ on }\Ga_D}.
\]
Throughout the paper, $C$ is used to denote a generic positive constant
which is independent of $h$ and $k$. We also use the shorthand
notation $A\lesssim B$ and $B\gtrsim A$ for the
inequality $A\leq C B$ and $B\geq CA$. $A\simeq B$ is a shorthand 
notation for the statement $A\lesssim B$ and $B\lesssim A$.

We now recall the definition of star-shaped domains.

\begin{definition}\label{def1}
$Q\subset \bR^d$ is said to be a {\em star-shaped} domain with respect
to $x_Q\in Q$ if there exists a nonnegative constant $c_Q$ such that
\begin{equation}\label{estar}
(x-x_Q)\cdot n_Q\ge c_Q \qquad \forall x\in\pa Q.
\end{equation}
$Q\subset \bR^d$ is said to be {\em strictly star-shaped} if $c_Q$ is positive.
\end{definition}

Throughout this paper, we assume that $\Ome_1$ is a strictly star-shaped domain.
In practice, $\Ome_1$ is often taken as a $d$-rectangle, which
trivially is a strictly star-shaped domain. We also assume the scatterer $D$ is a
star-shaped domain with respect to
the same point $x_{\Ome_1}$ as $\Ome_1$ does.  This then implies that
$x_{\Ome_1}\in D\subset \Ome_1$. More precisely, we assume that there exist constants $c_{\Om_1}>0$ and $c_D\ge 0$ such that
\begin{equation}\label{estar1}
(x-x_{\Om_1})\cdot n_\Om \ge c_{\Om_1} \quad \forall x\in\Ga_R\quad\text{ and }\quad(x-x_{\Om_1})\cdot n_D \ge c_{D} \quad \forall x\in\Ga_D.
\end{equation}
Here $n_\Om$ and $n_D$ are the unit outward normals to the boundaries
of $\Om$ and $D$, respectively.

 Under these assumptions the following
stability estimates for problem \eqref{e1.1}--\eqref{e1.3}
were proved in \cite{cummings00,Cummings_Feng06,hetmaniuk07}.

\begin{theorem}\label{stability}
Suppose $\Om_1\subset \bR^d$ is a strictly star-shaped domain and $D\subset\Om_1$
is a star-shaped domain. Then the solution
$u$ to problem \eqref{e1.1}--\eqref{e1.3} satisfies
\begin{eqnarray}\label{e2.1}
\|u\|_{H^j(\Ome)} \lesssim \Bigl(\frac{1}k+ k^{j-1} \Bigr)
\bigl( \|f\|_{L^2(\Ome)} + \|g\|_{L^2(\Ga_R)} \bigr)
\end{eqnarray}
for $j=0, 1$ if $u\in H^{3/2+\ep}(\Om)$ for some $\ep>0$.  (2.3) also holds for $j=2$ if $u\in H^2(\Om).$\end{theorem}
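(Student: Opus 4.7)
The plan is to prove Theorem~\ref{stability} by the classical multiplier (Morawetz/Rellich) method, combining two different test functions: the standard $\bar u$ test function (yielding control of the trace on $\Gamma_R$) and the radial multiplier $(x-x_{\Omega_1})\cdot\nabla\bar u$ (yielding control of the bulk $H^1$-norm). The two star-shapedness assumptions in \eqref{estar1} are precisely what is needed to give the boundary integrals produced by the radial multiplier the correct sign.

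First I would multiply \eqref{e1.1} by $\bar u$, integrate by parts over $\Omega$, and use \eqref{e1.3} and \eqref{e1.2} to obtain
\begin{equation*}
\int_\Omega \bigl(|\nabla u|^2-k^2|u|^2\bigr)\,dx
-\i k\int_{\Gamma_R}|u|^2\,ds
=\int_\Omega f\bar u\,dx-\int_{\Gamma_R}g\bar u\,ds.
\end{equation*}
Taking imaginary parts and applying Cauchy--Schwarz gives
$k\|u\|_{L^2(\Gamma_R)}^2\lesssim \|g\|_{L^2(\Gamma_R)}\|u\|_{L^2(\Gamma_R)}+\|f\|_{L^2(\Omega)}\|u\|_{L^2(\Omega)}$,
hence a preliminary bound on the $\Gamma_R$-trace.

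Next I would test \eqref{e1.1} against $\overline{(x-x_{\Omega_1})\cdot\nabla u}$ and take real parts. The Rellich identity for the Laplacian yields
\begin{equation*}
2\,\re\!\int_\Omega(-\Delta u)\overline{(x-x_{\Omega_1})\!\cdot\!\nabla u}\,dx
=(d-2)\|\nabla u\|_{L^2(\Omega)}^2
+\!\int_{\partial\Omega}\!\!\bigl(2\,\re(\p_n u\,\overline{(x-x_{\Omega_1})\!\cdot\!\nabla u})-(x-x_{\Omega_1})\!\cdot\! n\,|\nabla u|^2\bigr)ds,
\end{equation*}
while integration by parts on the $k^2u$-term produces $d\,k^2\|u\|_{L^2(\Omega)}^2-k^2\int_{\partial\Omega}(x-x_{\Omega_1})\cdot n\,|u|^2\,ds$. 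The key step is to exploit the Dirichlet condition on $\Gamma_D$ (where $\nabla u=(\partial_{n_D}u)n_D$, $n_\Omega=-n_D$, and $u=0$) to show that the $\Gamma_D$-contribution collapses to $-(x-x_{\Omega_1})\cdot n_D\,|\partial_{n_D}u|^2$, which by \eqref{estar1} has the favorable sign. On $\Gamma_R$ I would substitute $\partial_{n_\Omega}u=g-\i ku$ from \eqref{e1.2} and use $(x-x_{\Omega_1})\cdot n_\Omega\ge c_{\Omega_1}$.

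Combining both identities and absorbing cross terms by Cauchy--Schwarz with carefully weighted $\vepsi$'s, one arrives at a bound of the form
\begin{equation*}
k^2\|u\|_{L^2(\Omega)}^2+\|\nabla u\|_{L^2(\Omega)}^2
\lesssim \|f\|_{L^2(\Omega)}^2+\|g\|_{L^2(\Gamma_R)}^2+k\|u\|_{L^2(\Gamma_R)}^2,
\end{equation*}
into which the $\Gamma_R$-trace bound from the first step is inserted to close the estimate, yielding \eqref{e2.1} for $j=0,1$. Finally, for $j=2$, I would rewrite the equation as $-\Delta u=k^2u+f$ and invoke the standard $H^2$-regularity estimate for the mixed Dirichlet/Robin problem on $\Omega$, so that
$\|u\|_{H^2(\Omega)}\lesssim \|f\|_{L^2(\Omega)}+k^2\|u\|_{L^2(\Omega)}+\|g\|_{H^{1/2}(\Gamma_R)}+k\|u\|_{H^{1/2}(\Gamma_R)}$,
and bound the right-hand side using the $j=0,1$ estimates already obtained. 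The main obstacle is the careful bookkeeping of boundary contributions from the radial multiplier on $\Gamma_D$, where the Rellich identity must be combined with the Dirichlet condition to reduce $|\nabla u|^2$ to $|\partial_{n_D}u|^2$ so that the sign can be controlled by $c_D$; everything else is standard Cauchy--Schwarz with $k$-weights tuned so that the $k^2\|u\|_{L^2}^2$ and $\|\nabla u\|_{L^2}^2$ terms survive on the left-hand side.
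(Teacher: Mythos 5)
The paper does not prove Theorem~\ref{stability}; it is quoted from \cite{cummings00,Cummings_Feng06,hetmaniuk07}, so there is no internal proof to compare against. Your sketch is precisely the Rellich/Morawetz multiplier argument used in those references, and it is also the strategy the authors themselves transplant to the discrete level in Section~\ref{sec-sta} (test with $\bar u$ to get the $\Ga_R$-trace bound from the imaginary part, test with $\overline{(x-x_{\Om_1})\cdot\nab u}$ and use the Rellich identity together with the star-shapedness inequalities \eqref{estar1} to control $k^2\|u\|_{L^2(\Om)}^2+\|\nab u\|_{L^2(\Om)}^2$, then close with weighted Cauchy--Schwarz). So the approach is the right one and essentially identical to the source.

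Two caveats on the details as you wrote them. First, the signs in your displayed Rellich identity are globally flipped relative to the standard one (integrating $\mathrm{div}\bigl(\al|\nab u|^2\bigr)=(d-2)|\nab u|^2+2\re\bigl(\nab(\al\cdot\nab u)\cdot\nab\bar u\bigr)$ gives $2\re\int_\Om(-\Del u)\overline{\al\cdot\nab u}=\int_{\pa\Om}\al\cdot n|\nab u|^2-(d-2)\|\nab u\|_{L^2(\Om)}^2-2\re\int_{\pa\Om}\pa_nu\,\overline{\al\cdot\nab u}$); since whether the $\Ga_D$-term $\al\cdot n_D|\pa_{n_D}u|^2$ is ``favorable'' depends entirely on which side of the inequality it lands, this bookkeeping must be done consistently in a full writeup. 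Second, your $j=2$ step invokes elliptic regularity with the Robin datum measured in $H^{1/2}(\Ga_R)$, whereas the theorem's right-hand side only contains $\|g\|_{L^2(\Ga_R)}$; as written your argument proves the $H^2$ bound only for smoother $g$, and recovering the stated form requires the more careful treatment of the boundary regularity carried out in \cite{Cummings_Feng06,hetmaniuk07} (note the theorem sidesteps solvability issues by assuming $u\in H^2(\Om)$ outright). Neither point changes the verdict that your route is the intended one.
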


%%%%%%%%%%%%%%
\section{Formulation of discontinuous Galerkin methods}\label{sec-2}
To formulate our IPDG methods, we first need to introduce some notation.
Let $\cT_h$ be a family of triangulations of the domain $\Ome:=\Ome_1\setminus D$
parameterized by $h>0$. For any triangle/tetrahedron $K\in \cT_h$, we define $h_K:=\mbox{diam}(K)$.
Similarly, for each edge/face $e$ of $K\in \cT_h$, define $h_e:=\mbox{diam}(e)$.
We assume that the elements of $\cT_h$ satisfy the minimal angle condition. We define
\begin{eqnarray*}
\cE_h^I&:=& \mbox{ set of all interior edges/faces of $\cT_h$} ,\\
\cE_h^R&:=& \mbox{ set of all boundary edges/faces of $\cT_h$ on $\Ga_R$} ,\\
\cE_h^D&:=& \mbox{ set of all boundary edges/faces of $\cT_h$ on $\Ga_D$} ,\\
\cE_h^{RD}&:=& \cE_h^R\cup\cE_h^D= \mbox{ set of all boundary edges/faces of $\cT_h$} ,\\
\cE_h^{ID}&:=&\cE_h^I\cup\cE_h^D= \mbox{ set of all edges/faces of $\cT_h$ except those on $\Ga_R$}.
\end{eqnarray*}
We also define the jump $\jump{v}$ of $v$ on an interior edge/face
$e=\p K\cap \p K^\pr$ as
\[
\jump{v}|_{e}:=\left\{\begin{array}{ll}
       v|_{K}-v|_{K^\pr}, &\quad\mbox{if the global label of $K$ is bigger},\\
       v|_{K^\pr}-v|_{K}, &\quad\mbox{if the global label of $K^\pr$ is bigger}.
\end{array} \right.
\]
If $e\in\cE_h^{D}$, set $\jump{v}|_{e}=v|_{e}$. The following convention is
adopted in this paper
\[
\avrg{v}|_{e}:=\frac12\bigl( v|_{K}+ v|_{K^\pr} \bigr)\qquad \mbox{if }
e=\p K\cap \p K^\pr.
\]
If $e\in\cE_h^{D}$, set $\avrg{v}|_{e}=v|_{e}$. For every
$e=\p K\cap \p K^\pr\in\cE_h^I$, let $n_e$ be the unit outward normal
to edge/face $e$ of the element $K$ if the global label of $K$ is bigger
and of the element $K^\pr$ if the other way around. For every $e\in\cE_h^{RD}$, let $n_e=n_\Om$ the unit outward normal to $\pa\Om$.

Now we define the ``energy" space $E$ and the sesquilinear
form $a_h(\cdot,\cdot)$ on $E\times E$ as follows:
\begin{align}
E&:=\prod_{K\in\cT_h} H^2(K), \nonumber \\
\label{eah}
a_h(u,v)&:=b_h(u,v)+ \i \bigl( J_0(u,v)+J_1(u,v)+L_1(u,v) \bigr) \qquad\forall\, u, v\in E,
\end{align}
where
\begin{align}
b_h(u,v):=&\sum_{K\in\cT_h} (\nab u,\nab v)_K -\sum_{e\in\cE_h^{ID}}
\left( \pdaj{\frac{\pa u}{\pa n_e}}{v}+\si\pdja{u}{\frac{\pa v}{\pa n_e}}\right),\label{ebh}\db\\
J_0(u,v):=&\sum_{e\in\cE_h^{ID}}\frac{\ga_{0,e}}{h_e} \pdjj{u}{v},\label{eJ0}\db\\
J_1(u,v):=&\sum_{e\in\cE_h^{I}}\ga_{1,e} h_e \pdjj{\frac{\pa u}{\pa n_e}}{\frac{\pa v}{\pa n_e}},
\label{eJ1}\db\\
L_1(u,v):=&\sum_{e\in\cE_h^{ID}}\sum_{j=1}^{d-1}\frac{\be_{1,e}}{h_e} \pdjj{\frac{\pa u}{\pa \tau_e^j}}
{\frac{\pa v}{\pa \tau_e^j}},\label{eL1}
\end{align}
and $\si$ is an $h$-independent real number. $\ga_{0,e}, \ga_{1,e}$, and $\be_{1,e}$ are
nonnegative numbers to be specified later. $\{\tau^j_e\}_{j=1}^{d-1}$
denote an orthogonal coordinate frame on the edge/face $e\in \cE_h$, and
$\frac{\pa u}{\pa \tau_e^j}:=\nab u\cdot \tau_e^j$ stands for the
tangential derivative of $u$ in the direction $\tau_e^j$.

\begin{remark}
(a) Clearly, $a_h(\cdot,\cdot)$ is a consistent discretization for
$-\Del$ since $(-\Del u,v) = a_h(u,v)$ for all $u\in H_{\Ga_D}^1(\Om)\cap H^2(\Ome)$ and $v\in E$.

(b) If we regard  $a_h(\cdot,\cdot)$ as a bilinear form on the
subspace of real valued functions in $H_{\Ga_D}^1(\Om)$, then $a_h(\cdot,\cdot)$
is symmetric when $\si=1$ and is non-symmetric when $\si\neq 1$. In particular, $\si =-1$
would correspond to the non-symmetric IPDG method studied in \cite{ob00,rwg99}
for coercive elliptic problems. In this paper, for the ease of presentation,
we only consider the case $\si=1$, nevertheless the main results of
the paper can also be extended to the case  $\si\neq 1$.

(c) The terms in $\i\bigl( J_0(u,v)+J_1(u,v)+L_1(u,v)\bigr)$ are
so-called penalty terms.

(d) The penalty parameters in $\i\bigl( J_0(u,v)+J_1(u,v)+L_1(u,v) \bigr)$ are
$\i\gamma_{0,e}, \i\gamma_{1,e}$ and $\i\beta_{1,e}$, respectively.  So they are
pure imaginary numbers with positive imaginary parts. It turns out that if any
of them is replaced by a complex number with  positive
imaginary part, the ideas of the paper still apply.  Here we set their
real parts to be zero partly because the terms from real parts do not help much
(and do not cause any problem either) in our theoretical analysis and partly for
the ease of presentation. On the other hand, our numerical experiments
in Section \ref{sec-6.5} indicate that using penalty parameters
with nonzero real parts helps to reduce the pollution effect in the error.

(e) Penalizing the jumps of normal derivatives (i.e., the $J_1$ term above)
for second order PDEs was used early by Douglas and Dupont \cite{dd76} in
the context of $C^0$ finite element methods, by Baker \cite{baker77}
(with a different weighting, also see \cite{fk07}) for fourth order PDEs,
and by Arnold \cite{arnold82} in the context of IPDG methods for second order
parabolic PDEs.  Arnold \cite{arnold82} also proposed and analyzed IPDG
methods which penalize higher order normal derivatives. Note that we do
not introduce boundary terms for $e\in\cE_h^D$ in $J_1$ to ensure the
consistency of $a_h(\cdot,\cdot)$ with $-\Del$.

On the other hand, the idea of penalizing the jumps of tangential derivatives (i.e.,
the $L_1$ term above) seems is new. Later we will show that without $L_1$ term and $J_1$ term in
$a_h(\cdot,\cdot)$ the IPDG methods of this paper are still stable and convergent
but under a stinger mesh constraint, see Section \ref{sec-sta} and \ref{sec-err}.

(f) In this paper we consider the scattering problem with time dependence $e^{\i\om t}$, that is, the signs before $\i$'s in the Sommerfeld radiation condition \eqref{e0.3} and its first order approximation \eqref{e1.2} are positive. If we consider the scattering problem with time dependence $e^{-\i\om t}$, that is, the signs before $\i$'s in  \eqref{e0.3} and  \eqref{e1.2} are negative, then the penalty parameters should be complex numbers with  negative imaginary parts.
\end{remark}

Next, we introduce the following semi-norms/norms on the space $E$:
\begin{align}
\abs{v}_{1,h}:=&\Big(\sum_{K\in\cT_h} \norml{\nab v}{K}^2\Big)^{\frac12},\label{e2.5b}\db\\
\norm{v}_{1,h}:=&\left( \abs{v}_{1,h}^2+\sum_{e\in\cE_h^{I}}\ga_{1,e} h_e \norml{\jm{\frac{\pa v}{\pa n_e}}}{e}^2\right.
 \label{e2.5}\\
&\hskip 0.5in  \left.+\sum_{e\in\cE_h^{ID}} \left( \frac{\ga_{0,e}}{h_e}\norml{\jm{v}}{e}^2
+\sum_{j=1}^{d-1}\frac{\be_{1,e}}{h_e}\norml{\jm{\frac{\pa v}{\pa \tau_e^j}}}{e}^2\right)\right)^{\frac12},
\nn \db\\
\norme{v}:=&\left(\norm{v}_{1,h}^2+\sum_{e\in\cE_h^{ID}}
\frac{h_e}{\ga_{0,e}}\norml{\av{\frac{\pa v}{\pa n_e}}}{e}^2 \right)^{\frac12}.\label{e2.5a}
\end{align}
It is easy to see that $\norm{\cdot}_{1,h}$ and $\norme{\cdot}$ are norms on $E$ if
$\p D\neq \emptyset$ but only semi-norms if $\p D= \emptyset$.

Clearly, the sesquilinear form $a_h(\cdot,\cdot)$ with $\sigma=1$ satisfies: For any $v\in E$
\begin{align}
\re a_h(v,v)&=\abs{v}_{1,h}^2-2\re \sum_{e\in\cE_h^{ID}}\pdaj{\frac{\pa v}{\pa n_e}}{v},
\label{ea1}\\
\im a_h(v,v)&=J_0(v,v)+J_1(v,v)+L_1(v,v). \label{ea2}
\end{align}

With the help of the sesquilinear form $a_h(\cdot,\cdot)$ we now
introduce the following weak formulation for \eqref{e1.1}--\eqref{e1.3}:
Find $u\in E\cap H_{\Ga_D}^1(\Om)\cap H^2_{\loc}(\Ome)$ such that
\begin{equation}
a_h(u,v) - k^2(u,v) +\i k \langle u,v\rangle_{\Ga_R}
=(f,v)+\pd{g, v}_{\Ga_R},\qquad\forall v\in E\cap H_{\Ga_D}^1(\Om)\cap H^2_{\loc}(\Ome).
\label{2.6}
\end{equation}
The above formulation is consistent with the boundary value problem
\eqref{e1.1}--\eqref{e1.2} because $a_h(\cdot,\cdot)$ is consistent
with $-\Del$. It is clear that, if $u\in H^2(\Om)$ is the solution of \eqref{e1.1}--\eqref{e1.3}, then \eqref{2.6} holds for all $v\in E.$

For any $K\in \cT_h$, let $P_1(K)$ denote the set of all linear
polynomials on $K$. We define our IPDG approximation space $V^h$ as
\[
V^h:=\prod_{K\in \cT_h} P_1(K).
\]
Clearly, $V^h\subset E\subset L^2(\Ome)$. But $V^h\not\subset H^1(\Om)$.

We are now ready to define our IPDG methods based on the weak formulation
\eqref{2.6}: Find $u_h\in V^h$ such that
\begin{equation}\label{edg}
a_h(u_h,v_h) - k^2(u_h,v_h) +\i k \langle u_h,v_h\rangle_{\Ga_R}
=(f, v_h)+\pd{g, v_h}_{\Ga_R}  \qquad\forall v_h\in V^h.
\end{equation}

In the next two sections, we shall study the stability and
error analysis for the above IPDG methods. Especially, we are interested
in knowing how the stability constants and error constants depend on the wave
number $k$ (and mesh size $h$, of course) and what are the ``optimal" relationship
between mesh size $h$ and the wave number $k$.

%%%%%%%%%%%%%%%%%%%%%%%%%%%%%%%%%%%%%%%%%%
\section{Stability estimates}\label{sec-sta}
Since the Helmholtz operator is not a coercive elliptic operator,
the stability estimates given in Theorem \ref{stability} for the solution
of problem \eqref{e1.1}--\eqref{e1.2} is far from trivial. We refer
the reader to \cite{cummings00,Cummings_Feng06,hetmaniuk07} for a detailed exposition
in this direction. This difficulty is certainly inherited by any numerical
discretization of problem \eqref{e1.1}--\eqref{e1.2}. In fact, the situation
usually is worse in the discrete case because piecewise polynomials
(or piecewise smooth functions) are rigid, they are not as
flexible as the PDE trial functions. On the other hand, DG approximation
functions are much more flexible than Lagrange finite element functions
because they do not have any continuity constraint, instead, the
continuity of the numerical solutions is enforced weakly through
mesh-dependent bilinear or sesquilinear or nonlinear forms.

The goal of this section is to derive stability estimates (or a priori estimates)
for scheme \eqref{edg}.  To the end, momentarily, we assume solution $u_h$ to \eqref{edg}
exists and will revisit the existence and uniqueness issues later at the end of
the section. We like to note that because its strong indefiniteness,
unlike in the case of coercive elliptic and parabolic problems
(cf. \cite{arnold82,abcm01,baker77,dd76,fk07,ob00,rwg99,w78}),
the well-posedness of scheme \eqref{edg} is far from obvious
under practical mesh constraints.

To derive stability estimates for scheme \eqref{edg}, our approach is to mimic
the stability analysis for the Helmholtz problem \eqref{e1.1}--\eqref{e1.2}
given in \cite{cummings00,Cummings_Feng06,hetmaniuk07}. It turns out that this approach
indeed works for scheme \eqref{edg} although the analysis is
more delicate and complicate than that for the differential problem.
The key ingredients of our analysis are to use a special test function
$v_h=\al\cdot\na u_h$ (defined element-wise) with $\al(x):=x-x_{\Ome_1}$
in \eqref{edg} and to use the Rellich identity (cf. \cite{Cummings_Feng06} and
below) on each element.

Our first lemma of this section establishes three integral identities
which play an important role in our analysis.

\begin{lemma}\label{lem2.1}
Let $\al(x):=x-x_{\Ome_1}$, $v\in E$, $K\in\cT_h$ and $e\in \cE_h^{ID}$.
Then there hold
\begin{align}
&d\norml{v}{K}^2+2\re(v,\al\cdot\na v)_K
=\int_{\pa K}\al\cdot n_K\abs{v}^2,\label{eid1}\\
&(d-2)\norml{\na v}{K}^2+2\re\big(\na v,\na(\al\cdot\na v)\big)_K
=\int_{\pa K}\al\cdot n_K\abs{\na v}^2,\label{eid2}\\
&\pdaj{\frac{\pa v}{\pa n_e}}{\al\cdot\na v}
-\pd{\al\cdot n_e\av{\na v},\jm{\na v}}_e  \label{eid3}\\
&\hskip 1.35in=\sum_{j=1}^{d-1}\int_e\left(\al\cdot\tau_e^j\av{\frac{\pa v}{\pa n_e}}
-\al\cdot n_e\av{\frac{\pa v}{\pa \tau_e^j}}\right)\frac{\pa\jm{\overline{v}}}{\pa\tau_e^j},
\nn
\end{align}
where $x_{\Ome_1}$ denotes the point in the star-shaped domain definition for
$\Ome_1$ (see  \eqref{estar1}). Also note that in \eqref{eid1} and \eqref{eid2},
we omit the sign $\mbox{ds}$ in the integrals. We shall adopt this omission consistently
throughout this paper to save space.
\end{lemma}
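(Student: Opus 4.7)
All three identities are local Rellich-type identities for the linear multiplier $\al(x)=x-x_{\Om_1}$, which satisfies $\na\cdot\al=d$ and $\pa\al_i/\pa x_j=\de_{ij}$. My plan is to prove \eqref{eid1} directly from the divergence theorem, then bootstrap to \eqref{eid2} by applying \eqref{eid1} componentwise to $\pa_i v$, and finally derive \eqref{eid3} by a purely algebraic decomposition of $\al$ and $\na v$ in the orthonormal frame $\{n_e,\tau_e^1,\ldots,\tau_e^{d-1}\}$ on $e$.

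For \eqref{eid1}, I would start from the pointwise identity $\na\cdot(\al|v|^2)=d|v|^2+2\re(\overline{v}\,\al\cdot\na v)$ (using $\na\cdot\al=d$), integrate over $K$, and apply the divergence theorem, which is valid since $v\in H^2(K)$; the boundary term is exactly $\int_{\pa K}\al\cdot n_K|v|^2$. For \eqref{eid2}, applying \eqref{eid1} componentwise with $v$ replaced by $\pa_i v\in H^1(K)$ and summing over $i$ yields $d\,\norml{\na v}{K}^2+2\re\int_K\al_j\,\overline{\pa_i v}\,\pa_i\pa_j v=\int_{\pa K}\al\cdot n_K|\na v|^2$. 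On the other hand, the product rule $\pa_j(\al\cdot\na v)=\pa_j v+\al_i\pa_i\pa_j v$ gives $2\re(\na v,\na(\al\cdot\na v))_K=2\,\norml{\na v}{K}^2+2\re\int_K\al_i\,\overline{\pa_j v}\,\pa_i\pa_j v$; subtracting the two displays eliminates the common second-order integral and produces \eqref{eid2}.

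For \eqref{eid3}, I would decompose $\al=(\al\cdot n_e)n_e+\sum_j(\al\cdot\tau_e^j)\tau_e^j$ and $\na v=\frac{\pa v}{\pa n_e}n_e+\sum_j\frac{\pa v}{\pa\tau_e^j}\tau_e^j$ on $e$. Since $\al$ is continuous across $e$, the jump splits as $\jm{\al\cdot\na v}=(\al\cdot n_e)\jm{\frac{\pa v}{\pa n_e}}+\sum_j(\al\cdot\tau_e^j)\jm{\frac{\pa v}{\pa\tau_e^j}}$, and similarly $\av{\na v}\cdot\jm{\na v}$ decomposes into normal plus tangential pieces. Substituting into the left-hand side of \eqref{eid3}, the purely normal contributions $\langle(\al\cdot n_e)\av{\frac{\pa v}{\pa n_e}},\jm{\frac{\pa v}{\pa n_e}}\rangle_e$ cancel between $\pdaj{\frac{\pa v}{\pa n_e}}{\al\cdot\na v}$ and $\langle\al\cdot n_e\av{\na v},\jm{\na v}\rangle_e$, leaving $\sum_j\langle(\al\cdot\tau_e^j)\av{\frac{\pa v}{\pa n_e}}-(\al\cdot n_e)\av{\frac{\pa v}{\pa\tau_e^j}},\jm{\frac{\pa v}{\pa\tau_e^j}}\rangle_e$. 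Because the tangential derivative commutes with the jump, $\jm{\frac{\pa v}{\pa\tau_e^j}}=\frac{\pa\jm{v}}{\pa\tau_e^j}$, and writing out the $L^2(e)$-inner product (complex conjugation in the second slot) reproduces the right-hand side of \eqref{eid3}.

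The main obstacle will be the notation bookkeeping in \eqref{eid3}: tracking signs, averages versus jumps, and justifying $\jm{\frac{\pa v}{\pa\tau_e^j}}=\frac{\pa\jm{v}}{\pa\tau_e^j}$ via trace theory for piecewise $H^2$ functions. Identities \eqref{eid1} and \eqref{eid2} are essentially one-line calculations once $\na\cdot\al=d$ is invoked.
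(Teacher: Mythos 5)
Your proposal is correct and follows essentially the same route as the paper: \eqref{eid1} and \eqref{eid2} via the divergence theorem applied to $\al$ times a quadratic quantity (the paper writes the pointwise identity for $\mathrm{div}(\al(\na v\cdot\na\overline{v}))$ directly, whereas you reorganize the same computation by applying \eqref{eid1} to each $\pa_i v$ and cancelling the second-order term — an equivalent calculation), and \eqref{eid3} via exactly the paper's frame decomposition of $\na v$ and $\al\cdot\na v$ in $\{n_e,\tau_e^1,\dots,\tau_e^{d-1}\}$ with cancellation of the normal--normal contributions.
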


\begin{proof}
It is easy to verify by direct computations the following differential identities on $K$:
\begin{align*}
\mathrm{div} (\al(x)) &\equiv d,\\
\mathrm{div}(\al v\overline{v})&=d v\overline{v}+(\al\cdot\na v)\overline{v}
+v(\al\cdot\na \overline{v}),\\
\mathrm{div}(\al (\na v\cdot\na\overline{v}))&=(d-2) \na v\cdot\na\overline{v}
+\na(\al\cdot\na v)\cdot\na\overline{v}+\na v\cdot\na(\al\cdot\na \overline{v}).\\
\end{align*}
\eqref{eid1} and \eqref{eid2} then follows immediately from integrating the
above second and third identities over $K$.

To prove identity \eqref{eid3}, from the representations
\begin{align*}
\na v =\frac{\pa v}{\pa n_e} n_e+\sum_{j=1}^{d-1}\frac{\pa v}{\pa\tau_e^j}\tau_e^j,
\hskip 0.5in
\al\cdot\na v =\frac{\pa v}{\pa n_e} \al\cdot n_e
+\sum_{j=1}^{d-1}\frac{\pa v}{\pa\tau_e^j}\al\cdot \tau_e^j,
\end{align*}
we have
\begin{align*}
&\pdaj{\frac{\pa v}{\pa n_e}}{\al\cdot\na v}-\pd{\al\cdot n_e\av{\na v},\jm{\na v}}_e\\
&\hskip 0.6in
=\int_e\left(\al\cdot n_e\av{\frac{\pa v}{\pa n_e}}\jm{\frac{\pa \overline{v}}{\pa n_e}}
+\sum_{j=1}^{d-1}\al\cdot\tau_e^j\av{\frac{\pa v}{\pa n_e}}
\jm{\frac{\pa\overline{v}}{\pa\tau_e^j}}\right)\\
&\hskip 1in
-\int_e\left(\al\cdot n_e\av{\frac{\pa v}{\pa n_e}}\jm{\frac{\pa \overline{v}}{\pa n_e}}
+\al\cdot n_e\sum_{j=1}^{d-1}\av{\frac{\pa v}{\pa \tau_e^j}}
\jm{\frac{\pa\overline{v}}{\pa\tau_e^j}}\right)\\
&\hskip 0.6in
=\sum_{j=1}^{d-1}\int_e\left(\al\cdot\tau_e^j\av{\frac{\pa v}{\pa n_e}}
-\al\cdot n_e\av{\frac{\pa v}{\pa \tau_e^j}}\right)\frac{\pa\jm{\overline{v}}}{\pa\tau_e^j},
\end{align*}
which completes the proof of the lemma.
\end{proof}

\begin{remark}
The identity \eqref{eid2} can be viewed as a local version of the Rellich identity for
the Laplacian $\Del$ (cf. \cite{cummings00,Cummings_Feng06}).  Since $V^h\subset E$,
hence, \eqref{eid1}--\eqref{eid3} also hold for any function $v=v_h\in V^h$.
\end{remark}

Now, taking $v_h=u_h$ in \eqref{edg} yields
\begin{equation}\label{euh}
a_h(u_h,u_h) -k^2 \norml{u_h}{\Om}^2 + \i k \norml{u_h}{\Ga_R}^2
=(f, u_h)+\pd{g, u_h}_{\Ga_R}.
\end{equation}
Therefore, taking real part and imaginary part of the above equation
and using \eqref{ea1} and \eqref{ea2} we get the following lemma.

\begin{lemma}\label{lem3.1}
Let $u_h\in V^h$ solve \eqref{edg}. Then
\begin{align}\label{e3.1}
&\abs{u_h}_{1,h}^2-2\re \sum_{e\in\cE_h^{ID}}\pdaj{\frac{\pa u_h}{\pa n_e}}{u_h}
-k^2 \norml{u_h}{\Om}^2 \leq \abs{(f,u_h)+\pd{g, u_h}_{\Ga_R}},\\
&\sum_{e\in\cE_h^{ID}}\left(\frac{\ga_{0,e}}{h_e} \norml{\jm{u_h}}{e}^2
+\sum_{j=1}^{d-1}\frac{\be_{1,e}}{h_e}\norml{\jm{\frac{\pa u_h}{\pa \tau_e^j}}}{e}^2\right) +\sum_{e\in\cE_h^I}\ga_{1,e} h_e\norml{\jm{\frac{\pa u_h}{\pa n_e}}}{e}^2\label{e3.2} \\
&\hskip 2.7in
+k\norml{u_h}{\Ga_R}^2\leq \bigl|(f,u_h)+\pd{g, u_h}_{\Ga_R}\bigr|. \nn
\end{align}
\end{lemma}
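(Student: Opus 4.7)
The plan is straightforward algebraic bookkeeping: test the scheme \eqref{edg} with the choice $v_h = u_h$, producing the scalar identity \eqref{euh}, and then separate its real and imaginary parts. The two terms $k^2(u_h,u_h) = k^2\norml{u_h}{\Om}^2$ and $\i k\pd{u_h,u_h}_{\Ga_R} = \i k\norml{u_h}{\Ga_R}^2$ are respectively real and purely imaginary, so they split cleanly between the two pieces.

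For inequality \eqref{e3.1} I would take the real part of \eqref{euh}. The already-recorded identity \eqref{ea1} rewrites $\re a_h(u_h,u_h)$ as $\abs{u_h}_{1,h}^2 - 2\re\sum_{e\in\cE_h^{ID}}\pdaj{\pa u_h/\pa n_e}{u_h}$, while the Robin boundary term contributes nothing to the real part. Moving $k^2\norml{u_h}{\Om}^2$ to the left and then bounding the right-hand side by its modulus, $\re\bigl((f,u_h)+\pd{g,u_h}_{\Ga_R}\bigr) \le \abs{(f,u_h)+\pd{g,u_h}_{\Ga_R}}$, yields exactly \eqref{e3.1}.

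For \eqref{e3.2} I would take the imaginary part of \eqref{euh}. The $k^2$ term drops out, and identity \eqref{ea2} replaces $\im a_h(u_h,u_h)$ by $J_0(u_h,u_h)+J_1(u_h,u_h)+L_1(u_h,u_h)$. Writing these three terms out according to the definitions \eqref{eJ0}--\eqref{eL1}, adding the nonnegative contribution $k\norml{u_h}{\Ga_R}^2$ coming from $\im\bigl(\i k\pd{u_h,u_h}_{\Ga_R}\bigr)$, and again bounding the right-hand side in absolute value produces \eqref{e3.2}. Every term on the new left-hand side is manifestly nonnegative because $\ga_{0,e},\ga_{1,e},\be_{1,e}\ge 0$, so no cancellation issue arises.

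No step is subtle: the entire argument is a direct consequence of the decomposition \eqref{ea1}--\eqref{ea2} that is in place by construction, so there is no genuine obstacle to flag. The real work—exploiting these two inequalities together with the Rellich-type identities of Lemma \ref{lem2.1} through the test function $\al\cdot\na u_h$—will be carried out in the subsequent stability lemmas; Lemma \ref{lem3.1} only gathers the raw coercivity information that those later arguments will combine.
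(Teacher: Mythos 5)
Your proposal is correct and is exactly the paper's argument: the lemma is obtained by taking $v_h=u_h$ in \eqref{edg} to get \eqref{euh}, splitting into real and imaginary parts via \eqref{ea1}--\eqref{ea2}, and bounding the right-hand side by its modulus. Nothing is missing.
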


From \eqref{e3.1} and \eqref{e3.2} we can easily bound $\abs{u_h}_{1,h}$ and
the jumps in terms of $\norml{u_h}{\Om}$. In order to get the desired a priori
estimates, we now need to derive a reverse inequality whose coefficient
on the right-hand side can be controlled.  Such a reverse inequality,
which is often difficult to get under practical mesh constraints,
and stability estimates for scheme \eqref{edg} will be derived next.

\begin{theorem}\label{thm_sta}
Let $u_h\in V^h$ solve \eqref{edg} and suppose $\ga_{0,e}, \ga_{1,e}, \be_{1,e}>0$.  Define $M(f,g) :=\norml{f}{\Om}+\|g\|_{L^2(\Ga_R)}$. Then there exists a positive constant $\csta$ such that
\begin{align}\label{e3.3}
\norml{u_h}{\Om} &+\frac{1}{k} \norm{u_h}_{1,h}
+\frac{1}{k} \Bigl( \sum_{e\in\cE_h^R}\norml{\na u_h}{e}^2 \Bigr)^{\frac12} \\
&+ \frac{1}{k} \Bigl(\sum_{e\in\cE_h^D} c_D\bigl( k^2\norml{u_h}{e}^2
+\norml{\na u_h}{e}^2 \bigr) \Bigr)^{\frac12}
\lesssim \csta\, M(f,g) \nonumber
\end{align}
and
\begin{align}
\csta &\le\frac1k+\frac{1}{k^2}+\frac{1}{k^2}\max_{e\in\cE_h^D}\left(\frac{1}{\ga_{0,e}}+\frac{\ga_{0,e}}{h_e}+\sqrt{\frac{\ga_{0,e}}{\be_{1,e}}}+\frac{1}{\be_{1,e}}
\right)\label{csta}\\ & \qquad\qquad\;\;+\frac{1}{k^2}\max_{e\in\cE_h^I}\left(\frac{k^2+1}{\ga_{0,e}}+\frac{1}{h_e}\sqrt{\frac{\ga_{0,e}}{\ga_{1,e}}}
+\sqrt{\frac{\ga_{0,e}}{\be_{1,e}}}+\frac{1}{\be_{1,e}}\right).\nn
\end{align}
\end{theorem}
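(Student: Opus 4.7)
The plan is to imitate at the discrete level the continuous Rellich-multiplier argument of \cite{cummings00,Cummings_Feng06,hetmaniuk07}. The decisive step is to test the discrete equation \eqref{edg} against the element-wise-defined Rellich multiplier $v_h := \al\cdot\na u_h$ with $\al(x)=x-x_{\Om_1}$. Since $u_h$ is piecewise linear on each $K\in\T$, so is $v_h$, hence $v_h\in V^h$ and the choice is admissible. Taking real parts, applying the local Rellich identity \eqref{eid2} element-wise to $\re b_h(u_h,v_h)$, and identity \eqref{eid1} to $\re(-k^2(u_h,v_h))$, yields a bulk contribution of the form $\tfrac{dk^2}{2}\norml{u_h}{\Om}^2-\tfrac{d-2}{2}\abs{u_h}_{1,h}^2$ together with $\tfrac{1}{2}\sum_{K\in\T}\int_{\p K}\al\cdot n_K\bigl(\abs{\na u_h}^2-k^2\abs{u_h}^2\bigr)$, the DG edge contribution coming from \eqref{ebh} with $v=v_h$, the penalty contribution $-\im(J_0+J_1+L_1)(u_h,v_h)$, and the Robin cross-term $-k\,\im\pd{u_h,\al\cdot\na u_h}_{\Ga_R}$.

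I then reassemble the element-boundary sum into a sum over $e\in\cE_h^I$ and a sum over $e\in\cE_h^R\cup\cE_h^D$. On an interior edge the two neighboring pieces combine into $2\re\int_e \al\cdot n_e\bigl(\av{\na u_h}\cdot\jm{\overline{\na u_h}}-k^2\av{u_h}\jm{\overline{u_h}}\bigr)$, and the $\na u_h$ portion is exactly the quantity on the left-hand side of identity \eqref{eid3}. Combining with the DG edge terms $\pdaj{\p u_h/\p n_e}{\al\cdot\na u_h}$ via \eqref{eid3} converts the interior-edge contribution into tangential-derivative jumps controlled by $L_1$, plus pairings of $\av{\p u_h/\p n_e}$ and $\av{\na u_h}$ against $\jm{u_h}$ and $\jm{\na u_h}$ (penalized by $J_0$ and $J_1$ respectively). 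On $\cE_h^R\cup\cE_h^D$ the star-shaped hypothesis \eqref{estar1} gives $\al\cdot n_\Om\ge c_{\Om_1}$ and $\al\cdot n_D\ge c_D$, producing the nonnegative trace quantities $c_{\Om_1}\norml{\na u_h}{\Ga_R}^2$ and $c_D\norml{\na u_h}{\Ga_D}^2$ appearing on the left of \eqref{e3.3}. The signed $-k^2\al\cdot n\abs{u_h}^2$ pieces are absorbed as follows: on $\Ga_R$ the Robin cross-term is split via $\na u_h=(\p u_h/\p n_\Om)n_\Om+\sum_j(\p u_h/\p\tau_\Om^j)\tau_\Om^j$ and handled by weighted Cauchy-Schwarz against the $c_{\Om_1}\norml{\na u_h}{\Ga_R}^2$ already on the left and the $k\norml{u_h}{\Ga_R}^2$ bound from \eqref{e3.2}; on $\Ga_D$ the $-c_D k^2\abs{u_h}^2$ piece is absorbed into the $\tfrac{\ga_{0,e}}{h_e}\norml{\jm{u_h}}{e}^2$ control from \eqref{e3.2}, which is precisely the origin of the $\ga_{0,e}/h_e$ factor in \eqref{csta}.

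The resulting identity has the schematic form: (nonnegative quantities on the left of \eqref{e3.3} together with $k^2\norml{u_h}{\Om}^2$) $\lesssim$ (penalized jumps) $+$ (unpenalized interior-edge averages) $+$ (data terms). The penalized jumps are dominated by $\bigl|(f,u_h)+\pd{g,u_h}_{\Ga_R}\bigr|$ via \eqref{e3.2}. Unpenalized averages $\av{\na u_h}$ on interior edges are handled with the discrete trace inequality on piecewise linears, $h_e^{1/2}\norml{\av{\na u_h}}{e}\lesssim \abs{u_h}_{1,h}$, paired against $h_e^{-1/2}\norml{\jm{\na u_h}}{e}$ through weighted Cauchy-Schwarz, so that the jump factor is dominated by the $J_1$-weight $h_e$ (producing the $\ga_{1,e}^{-1}$ coefficient) and the gradient factor by $\abs{u_h}_{1,h}$, itself controlled via \eqref{e3.1}. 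Analogous accounting for $\ga_{0,e}^{-1}$ and $\be_{1,e}^{-1}$, with sharp Young-inequality constants, accumulates the exact maxima listed in \eqref{csta}. Young's inequality applied to the resulting bound $k^2\norml{u_h}{\Om}^2\lesssim \norml{u_h}{\Om}M(f,g)+(\text{coeff})\,M(f,g)^2$ gives $\norml{u_h}{\Om}\lesssim \csta M(f,g)$, and the remaining norms in \eqref{e3.3} follow by combining this $L^2$ bound with \eqref{e3.1}--\eqref{e3.2}.

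The main obstacle is the bookkeeping of penalty-parameter dependence in \eqref{csta}: since the interior-edge average $\av{\na u_h}$ is \emph{not} penalized, the argument closes only because the $J_1$ weight $h_e$ exactly cancels the $h_e^{-1}$ produced by the discrete trace inequality, and because the $L_1$ penalty is precisely what controls the tangential-jump remainder arising from \eqref{eid3}; this is the structural reason why $J_1$ and $L_1$ are built into $a_h$. The secondary delicate point is the Robin absorption on $\Ga_R$, where the Robin cross-term must be folded into the $c_{\Om_1}$ trace quantity without destroying its positivity, which forces the positive-imaginary-part choice of the penalty parameters. Once \eqref{e3.3} is established, existence and uniqueness of $u_h\in V^h$ follow immediately: \eqref{edg} is a square linear system on the finite-dimensional space $V^h$, and applying the estimate with $f=g=0$ forces $u_h=0$, so the coefficient matrix is invertible.
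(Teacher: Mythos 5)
Your proposal follows essentially the same route as the paper's proof: testing \eqref{edg} with the element-wise Rellich multiplier $v_h=\al\cdot\na u_h\in V^h$ (admissible because $u_h$ is piecewise linear), invoking the local identities \eqref{eid1}--\eqref{eid3} together with the star-shapedness of $\Om_1$ and $D$, controlling the jump terms through \eqref{e3.1}--\eqref{e3.2}, and closing with Young's inequality; the existence/uniqueness corollary is likewise identical. The one minor bookkeeping discrepancy is in the provenance of the $\frac{1}{h_e}\sqrt{\ga_{0,e}/\ga_{1,e}}$ term of \eqref{csta}: in the paper it comes from the cross term $\im J_0(u_h,v_h)$, whose normal-derivative component pairs $\jm{u_h}$ with $\jm{\pa u_h/\pa n_e}$, rather than from pairing $\av{\na u_h}$ against $\jm{\na u_h}$ (that pairing is cancelled exactly by \eqref{eid3}), but this does not affect the validity of your argument.
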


\begin{proof}
Since the proof is long, we divide it into three steps.

{\em Step 1: A representation identity for $\norml{u_h}{\Om}$}.
It follows from \eqref{eid1} with $v=u_h$ that
\[
d \norml{u_h}{K}^2 =  \int_{\pa K}\al\cdot n_K\abs{u_h}^2
-2\re (u_h,\al \cdot \nab u_h)_K.
\]
Summing over all $K\in\cT_h$ yields
\[
d \norml{u_h}{\Ome}^2 =  \sum_{K\in\cT_h} \int_{\pa K}\al\cdot n_K\abs{u_h}^2
-2\sum_{K\in\cT_h}\re (u_h,\al \cdot \nab u_h)_K,
\]
hence,
\begin{equation}\label{added1}
2k^2 \norml{u_h}{\Ome}^2 =  k^2\sum_{K\in\cT_h} \int_{\pa K}\al\cdot n_K\abs{u_h}^2
-(d-2) k^2 \norml{u_h}{\Ome}^2 -2k^2\re (u_h, v_h),
\end{equation}
where $v_h\in E$ is defined by $v_h|_K=\al\cdot\na u_h|_K$ for every $K\in \cT_h$.
It is easy to check that $v_h|_K$ is a linear polynomial on $K$, 
hence, $v_h\in V^h$.  Using this $v_h$ as a test function in \eqref{edg} 
and taking the real part of the resulted equation we get
\begin{equation}\label{added2}
-k^2 \re (u_h, v_h) = \re \bigl( (f,v_h)+\pd{g, v_h}_{\Ga_R}
-a_h(u_h,v_h)-\i k\pd{u_h,v_h}_{\Ga_R} \bigr).
\end{equation}

Now,  it follows from \eqref{added1}, \eqref{euh} and \eqref{added2} that
\begin{align}
2k^2\norml{u_h}{\Om}^2 &=
k^2\sum_{K\in\cT_h}\int_{\pa K}\al\cdot n_K\abs{u_h}^2
+(d-2)\re \bigl( (f,u_h) + \pd{g, u_h}_{\Ga_R}-a_h(u_h,u_h) \bigr)  \nn\\
&\quad  +2\re \bigl( (f,v_h)+ \pd{g, v_h}_{\Ga_R}
-a_h(u_h,v_h)-\i k\pd{u_h,v_h}_{\Ga_R} \bigr)  \nn\db\\
&= k^2\sum_{K\in\cT_h}\int_{\pa K}\al\cdot n_K\abs{u_h}^2
+(d-2)\re \bigl( (f,u_h) + \pd{g, u_h}_{\Ga_R} \bigr) \nn \\
&\quad +2\re \bigl( (f,v_h) + \pd{g, v_h}_{\Ga_R} \bigr)
+ 2k\im\pd{u_h,v_h}_{\Ga_R} \label{e3.5}\\
&\quad -\sum_{K\in\cT_h}\left((d-2)\norml{\na u_h}{K}^2
+2\re(\na u_h,\na v_h)_K\right)\nn\\
&\quad +2\sum_{e\in\cE_h^{ID}}\left((d-2)\re\pdaj{\frac{\pa u_h}{\pa n_e}}{u_h}
+\re\pdaj{\frac{\pa u_h}{\pa n_e}}{v_h}\right.\nn\\
&\quad \left.+\re\pdja{u_h}{\frac{\pa v_h}{\pa n_e}} \right)
+2\im\big(J_0(u_h,v_h)+J_1(u_h,v_h) +L_1(u_h,v_h)\big). \nn
\end{align}
Using the identity $\abs{a}^2-\abs{b}^2=\re (a+b)(\bar a-\bar b)$ we have
\begin{align}\label{e3.6}
\sum_{K\in\cT_h}\int_{\pa K}\al\cdot n_K\abs{u_h}^2
=2\sum_{e\in\cE_h^I}\re \Langle \al\cdot n_e \av{u_h},\jm{u_h} \Rangle_e
+\Langle \al\cdot n_\Ome, |u_h|^2 \Rangle_{\p \Ome}.
\end{align}
From the Rellich identity \eqref{eid2} and noting that $\pd{\al\cdot n_e\av{\na u_h},\jm{\na u_h}}_e=\pd{\al\cdot n_e, |\na u_h|^2}_e$ for $e\in\cE_h^{D}$ we get
\begin{align}\label{e3.7}
&\sum_{K\in\cT_h}\Bigl((d-2)\norml{\na u_h}{K}^2+2\re(\na u_h,\na v_h)_K\Bigr)
=\sum_{K\in\cT_h}\int_{\pa K}\al\cdot n_K\abs{\na u_h}^2  \\
=&2\sum_{e\in\cE_h^I} \re  \pd{\al\cdot n_e\av{\na u_h},\jm{\na u_h} }_e
+\sum_{e\in\cE_h^{RD}} \pd{\al\cdot n_e, |\na u_h|^2}_e. \nn\\
=&2\sum_{e\in\cE_h^{ID}} \re  \pd{\al\cdot n_e\av{\na u_h},\jm{\na u_h}}_e
+\sum_{e\in\cE_h^{R}} \pd{\al\cdot n_e, |\na u_h|^2}_e-\sum_{e\in\cE_h^{D}} \pd{\al\cdot n_e, |\na u_h|^2}_e.\nn
\end{align}
On noting that $u_h$ is piecewise linear and $v_h|_K=(x-x_{\Om_1})\cdot\na u_h|_K$,
then $\na v_h|_K=\na u_h|_K$. Hence,
\begin{equation}\label{e3.7a}
\im\big(J_1(u_h,v_h)+L_1(u_h,v_h)\big)=\im\big(J_1(u_h,u_h)+L_1(u_h,u_h)\big)=0.
\end{equation}
Plugging \eqref{e3.6}--\eqref{e3.7a} into \eqref{e3.5}
then gives the following representation for $\norml{u_h}{\Om}$:
\begin{align}\label{e3.8}
2k^2\norml{u_h}{\Om}^2 &=(d-2)\re\bigl( (f,u_h) +\pd{g, u_h}_{\Ga_R} \bigr)
+2\re \bigl( (f,v_h)  +\pd{g, v_h}_{\Ga_R} \bigr) \nn\\
&\quad +2k^2\sum_{e\in\cE_h^I}\re\pd{\al\cdot n_e\av{u_h},\jm{u_h}}_e
+k^2\pd{\al\cdot n_\Ome, |u_h|^2}_{\pa\Om}\nn\\
&\quad +2k\im\pd{u_h,v_h}_{\Ga_R}-\sum_{e\in\cE_h^{R}}\pd{\al\cdot n_e, |\na u_h|^2}_e+\sum_{e\in\cE_h^{D}}\pd{\al\cdot n_e, |\na u_h|^2}_e\nn\\
&\quad +2\sum_{e\in\cE_h^{ID}}\re\left(-\pd{\al\cdot n_e\av{\na u_h},\jm{\na u_h}}_e
+\pdaj{\frac{\pa u_h}{\pa n_e}}{v_h}\right) \\
&\quad
+2\sum_{e\in\cE_h^{ID}}\left((d-1)\re\pdaj{\frac{\pa u_h}{\pa n_e}}{u_h}+\re\pdja{u_h}{\frac{\pa v_h}{\pa n_e}} \right)\nn\\
&\quad -2\sum_{e\in\cE_h^{ID}}\re\pdaj{\frac{\pa u_h}{\pa n_e}}{u_h}
+2\im J_0(u_h,v_h).  \nn
\end{align}

{\em Step 2: Derivation of a reverse inequality.}
We bound each terms on the right hand side of \eqref{e3.8}.
For an edge/face $e\in\cE_h^I$, let $K_e$ and $K_e^\pr$ denote the two elements
in $\cT_h$ that share $e$. For an edge/face $e\in\cE_h^D$, let $K_e$ denote the element
in $\cT_h$ that has $e$ as an edge/face and $K_e^\pr=\emptyset$. We have
\begin{align}\label{e3.9}
2k^2\sum_{e\in\cE_h^I}\re\pd{\al\cdot n_e\av{u_h},\jm{u_h}}_e
&\le Ck^2\sum_{e\in\cE_h^I}h_e^{-\frac12}\norml{u_h}{K_e\cup K_e^\pr}\norml{\jm{u_h}}{e} \\
&\le \frac{k^2}{3}\norml{u_h}{\Om}^2+C\sum_{e\in\cE_h^I}\frac{k^2}{\ga_{0,e}}\frac{\ga_{0,e}}{h_e}\norml{\jm{u_h}}{e}^2. \nn
\end{align}
It is clear that
\begin{align}\label{e3.10}
k^2\pd{\al\cdot n_\Ome, |u_h|^2}_{\p \Ome} &= k^2\pd{\al\cdot n_\Om, |u_h|^2}_{\Ga_R}
+\sum_{e\in\cE_h^{D}}k^2\pd{\al\cdot n_e, |u_h|^2}_e \\
&\le C k^2 \norml{u_h}{\Ga_R}^2 +\sum_{e\in\cE_h^{D}}k^2\pd{\al\cdot n_e, |u_h|^2}_e. \nn
\end{align}
It follows from the star-shaped assumption on $\Om_1$ that
\begin{align}\label{e3.11}
2k&\im\pd{u_h,v_h}_{\Ga_R}-\sum_{e\in\cE_h^{R}}\pd{\al\cdot n_e, |\na u_h|^2}_e \\
&\le Ck\sum_{e\in\cE_h^{R}}\norml{u_h}{e}\norml{\na u_h}{e}
-c_{\Ome_1}\sum_{e\in\cE_h^{R}}\norml{\na u_h}{e}^2  \nn\\
&\le Ck^2\norml{u_h}{\Ga_R}^2 -\frac{c_{\Ome_1}}2 \sum_{e\in\cE_h^{R}}\norml{\na u_h}{e}^2 . \nn
\end{align}
By \eqref{eid3} we obtain
\begin{align}\label{e3.12}
&2\sum_{e\in\cE_h^{ID}}\re\left(-\pd{\al\cdot n_e\av{\na u_h},\jm{\na u_h}}_e
+\pdaj{\frac{\pa u_h}{\pa n_e}}{v_h}\right) \\
&\hskip 0.5in
=2\sum_{e\in\cE_h^{ID}}\sum_{j=1}^{d-1} \re
\int_e\left(\al\cdot\tau_e^j\av{\frac{\pa u_h}{\pa n_e}}
-\al\cdot n_e\av{\frac{\pa u_h}{\pa \tau_e^j}}\right)
\frac{\pa\jm{\overline{u}_h}}{\pa\tau_e^j} \nn\\
&\hskip 0.5in
\lesssim\sum_{e\in\cE_h^{ID}}\sum_{j=1}^{d-1}h_e^{-\frac12}\sum_{K=K_e, K_e^\pr}
\norml{\na u_h}{K}
\norml{\jm{\frac{\pa u_h}{\pa\tau_e^j}}}{e}  \nn\\
&\hskip 0.5in
\le \frac13\abs{u_h}_{1,h}^2
+C\sum_{e\in\cE_h^{ID}}\frac{1}{\be_{1,e}}\sum_{j=1}^{d-1}\frac{\be_{1,e}}{h_e}\norml{\jm{\frac{\pa u_h}{\pa\tau_e^j}}}{e}^2. \nn
\end{align}
Noting that $\frac{\pa v_h}{\pa n_e}=\frac{\pa u_h}{\pa n_e}$ we have
\begin{align}\label{e3.13}
&2\sum_{e\in\cE_h^{ID}}\left((d-1)\re\pdaj{\frac{\pa u_h}{\pa n_e}}{u_h}
+\re\pdja{u_h}{\frac{\pa v_h}{\pa n_e}}\right) \\
&\hskip 0.5in
\lesssim\sum_{e\in\cE_h^{ID}}h_e^{-\frac12}\sum_{K=K_e, K_e^\pr}
\norml{\na u_h}{K}\norml{\jm{u_h}}{e} \nn\\
&\hskip 0.5in
\le \frac13\abs{u_h}_{1,h}^2+C\sum_{e\in\cE_h^{ID}}\frac{1}{\ga_{0,e}}\frac{\ga_{0,e}}{h_e}\norml{\jm{u_h}}{e}^2. \nn
\end{align}
From \eqref{eJ0}, the inverse inequality and \eqref{e3.2} we get
\begin{align} \label{e3.14}
2\im\big(&J_0(u_h,v_h)\big) =2\im\sum_{e\in\cE_h^{ID}}\frac{\ga_{0,e}}{h_e} \pdjj{u_h}{v_h}  \\
&=2\im\sum_{e\in\cE_h^{ID}}\frac{\ga_{0,e}}{h_e}\pdjj{u_h}{\al\cdot n_e\frac{\pa u_h}{\pa n_e}
+\sum_{j=1}^{d-1}\al\cdot\tau_e^j\frac{\pa u_h}{\pa \tau_e^j}} \nn\db \\
&\le 2\im\sum_{e\in\cE_h^D}\frac{\ga_{0,e}}{h_e}\pd{\al\cdot n_e u_h,\frac{\pa u_h}{\pa n_e}}_e+C\sum_{e\in\cE_h^I}\frac{\ga_{0,e}}{h_e} \norml{\jm{u_h}}{e}
\norml{\jm{\frac{\pa u_h}{\pa n_e}}}{e}
\nn\\
&\quad+C\sum_{e\in\cE_h^{ID}}\frac{\ga_{0,e}}{h_e}\norml{\jm{u_h}}{e}\sum_{j=1}^{d-1}\norml{\jm{\frac{\pa u_h}{\pa\tau_e^j}}}{e} \nn\db\\
&\le 2\im\sum_{e\in\cE_h^D}\frac{\ga_{0,e}}{h_e}\pd{\al\cdot n_e u_h,\frac{\pa u_h}{\pa n_e}}_e
\nn\\
 &\quad+C\sum_{e\in\cE_h^I}\sqrt{\frac{\ga_{0,e}}{\ga_{1,e}}}\frac{1}{h_e} \left(\frac{\ga_{0,e}}{h_e}\norml{\jm{u_h}}{e}^2+
\ga_{1,e}h_e\norml{\Jump{\frac{\pa u_h}{\pa n_e}}}{e}^2\right)\nn\\
&\quad+C\sum_{e\in\cE_h^{ID}}\sqrt{\frac{\ga_{0,e}}{\be_{1,e}}}\left(\frac{\ga_{0,e}}{h_e}\norml{\jm{u_h}}{e}^2+
\sum_{j=1}^{d-1}\frac{\be_{1,e}}{h_e}\norml{\jm{\frac{\pa u_h}{\pa\tau_e^j}}}{e}^2\right). \nn
\end{align}
Since $D$ is star-shaped, we have
\begin{align}\label{e3.15}
&\sum_{e\in\cE_h^{D}} \left( k^2\pd{\al\cdot n_e, |u_h|^2}_e
+\pd{\al\cdot n_e, |\na u_h|^2}_e
+\frac{2\ga_{0,e}}{h_e}\im\pd{\al\cdot n_e u_h,\frac{\pa u_h}{\pa n_e}}_e \right) \\
&=-\sum_{e\in\cE_h^{D}}  \left( k^2\pd{\al\cdot n_D, |u_h|^2}_e
+\pd{\al\cdot n_D, |\na u_h|^2}_e
+\frac{2\ga_{0,e}}{h_e} \im\pd{\al\cdot n_D u_h,\frac{\pa u_h}{\pa n_D}}_e \right) \nonumber \db\\
&\le -\sum_{e\in\cE_h^{D}}\pd{\al\cdot n_D, k^2|u_h|^2+|\na u_h|^2-2\frac{\ga_{0,e}}{h_e}|u_h||\na u_h|}_e \nonumber \db\\
&\le -\sum_{e\in\cE_h^{D}}\pd{\al\cdot n_D, k^2|u_h|^2+\frac12|\na u_h|^2-2\frac{\ga_{0,e}}{h_e}\frac{\ga_{0,e}}{h_e}|u_h|^2}_e \nonumber \\
&\le -c_D\sum_{e\in\cE_h^{D}} \Bigl( k^2\|u_h\|_{L^2(e)}^2 + \frac12 \|\na u_h\|_{L^2(e)}^2\Bigr)
+C\sum_{e\in\cE_h^{D}}\frac{\ga_{0,e}}{h_e} \frac{\ga_{0,e}}{h_e}\norml{u_h}{e}^2 , \nonumber
\end{align}

Putting \eqref{e3.8}--\eqref{e3.15} together we have
\begin{align*}
2k^2\norml{u_h}{\Om}^2 &\le (d-2)\re\bigl( (f,u_h) +\pd{g, u_h}_{\Ga_R} \bigr)
+2\re \bigl( (f,v_h)  +\pd{g, v_h}_{\Ga_R} \bigr) + \frac{k^2}{3}\norml{u_h}{\Om}^2\\
&\quad +C\sum_{e\in\cE_h^I}\frac{k^2}{\ga_{0,e}}\frac{\ga_{0,e}}{h_e}\norml{\jm{u_h}}{e}^2+ C k^2 \norml{u_h}{\Ga_R}^2-\frac{c_{\Ome_1}}2 \sum_{e\in\cE_h^R}\norml{\na u_h}{e}^2  \\
&\quad  +\frac23\abs{u_h}_{1,h}^2
+C\sum_{e\in\cE_h^{ID}}\left(\frac{1}{\be_{1,e}}\sum_{j=1}^{d-1}\frac{\be_{1,e}}{h_e}\norml{\jm{\frac{\pa u_h}{\pa\tau_e^j}}}{e}^2+\frac{1}{\ga_{0,e}}\frac{\ga_{0,e}}{h_e}\norml{\jm{u_h}}{e}^2\right) \\
&\quad
-2\sum_{e\in\cE_h^{ID}}\re\pdaj{\frac{\pa u_h}{\pa n_e}}{u_h}-c_D\sum_{e\in\cE_h^{D}} \Bigl( k^2\|u_h\|_{L^2(e)}^2 + \frac12 \|\na u_h\|_{L^2(e)}^2\Bigr)\nn\\
&\quad+C\sum_{e\in\cE_h^{D}}\frac{\ga_{0,e}}{h_e}\frac{\ga_{0,e}}{h_e}\norml{u_h}{e}^2\\
&\quad+C\sum_{e\in\cE_h^I}\sqrt{\frac{\ga_{0,e}}{\ga_{1,e}}}\frac{1}{h_e} \left(\frac{\ga_{0,e}}{h_e}\norml{\jm{u_h}}{e}^2+
\ga_{1,e}h_e\norml{\Jump{\frac{\pa u_h}{\pa n_e}}}{e}^2\right)\nn
\end{align*}
\begin{align*}
&\quad+C\sum_{e\in\cE_h^{ID}}\sqrt{\frac{\ga_{0,e}}{\be_{1,e}}}\left(\frac{\ga_{0,e}}{h_e}\norml{\jm{u_h}}{e}^2+
\sum_{j=1}^{d-1}\frac{\be_{1,e}}{h_e}\norml{\jm{\frac{\pa u_h}{\pa\tau_e^j}}}{e}^2\right). \nn\\
\end{align*}
Therefore from \eqref{e3.2},
\begin{align*}
&2k^2\norml{u_h}{\Om}^2
+\frac{c_{\Ome_1}}2 \sum_{e\in\cE_h^R}\norml{\na u_h}{e}^2
+ c_D\sum_{e\in\cE_h^{D}} \Bigl( k^2\|u_h\|_{L^2(e)}^2 + \frac12 \|\na u_h\|_{L^2(e)}^2\Bigr) \db\\
&\quad \le \frac{k^2}{3}\norml{u_h}{\Om}^2+\frac23\abs{u_h}_{1,h}^2
-2\sum_{e\in\cE_h^{ID}}\re\pdaj{\frac{\pa u_h}{\pa n_e}}{u_h} +2\re \bigl( (f,v_h)  +\pd{g, v_h}_{\Ga_R} \bigr)\db \\
&\qquad +C\left(k+1+\max_{e\in\cE_h^D}\left(\frac{1}{\ga_{0,e}}+\frac{\ga_{0,e}}{h_e}+\sqrt{\frac{\ga_{0,e}}{\be_{1,e}}}+\frac{1}{\be_{1,e}}\right)\right.\db\\
&\qquad\qquad\qquad\quad  \left.+\max_{e\in\cE_h^I}\left(\frac{k^2+1}{\ga_{0,e}}+\frac{1}{h_e}\sqrt{\frac{\ga_{0,e}}{\ga_{1,e}}}+\sqrt{\frac{\ga_{0,e}}{\be_{1,e}}}+\frac{1}{\be_{1,e}}\right)
\right)\abs{(f,u_h)+\pd{g, u_h}_{\Ga_R}}.
\end{align*}

\medskip
{\em Step 3: Finishing up.} It follows from \eqref{e3.1}, \eqref{e3.2}, \eqref{csta}, and the above inequality that
\begin{align*}
2&k^2\norml{u_h}{\Om}^2+\frac{c_{\Ome_1}}2 \sum_{e\in\cE_h^R}\norml{\na u_h}{e}^2
+ c_D\sum_{e\in\cE_h^{D}} \Bigl( k^2\|u_h\|_{L^2(e)}^2 + \frac12 \|\na u_h\|_{L^2(e)}^2\Bigr) \\
& \le Ck^2\csta\abs{(f,u_h)+\pd{g, u_h}_{\Ga_R} }  +\frac{4k^2}{3}\norml{u_h}{\Om}^2 -\frac13\abs{u_h}_{1,h}^2 +2\re \bigl( (f,v_h)  +\pd{g, v_h}_{\Ga_R} \bigr)\\
& \le \frac{5k^2}{3} \norml{u_h}{\Om}^2-\frac16\abs{u_h}_{1,h}^2+\frac{c_{\Ome_1}}4 \sum_{e\in\cE_h^R}\norml{\na u_h}{e}^2+Ck^2\csta^2  M(f,g)^2,
\end{align*}
where $M(f,g) :=\norml{f}{\Om}+\|g\|_{L^2(\Ga_R)}$ and we have used $k^2\norml{u_h}{\Ga_R}^2\le k^2\norml{u_h}{\Om}^2+M(f,g)^2$ (cf. \eqref{e3.2}) to derive the last inequality.  Hence,
\begin{align*}
\norml{u_h}{\Om} + \frac{1}{k} \abs{u_h}_{1,h}
&+ \frac{1}{k} \Bigl( \sum_{e\in\cE_h^R}\norml{\na u_h}{e}^2 \Bigr)^{\frac12} \\
&+ \frac{1}{k} \Bigl( \sum_{e\in\cE_h^{D}} c_D\bigl( k^2\|u_h\|_{L^2(e)}^2
+ \frac12 \|\na u_h\|_{L^2(e)}^2\bigr)  \Bigr)^{\frac12}
\lesssim \csta M(f,g),
\end{align*}
which together with \eqref{e3.2} implies  \eqref{e3.3}. The proof is completed.
\end{proof}

\begin{remark}
If the penalty parameters are taken as $\ga_{1,e}\equiv 0$, $\be_{1,e}\equiv 0$ and $\ga_{0,e}>0$,
the terms $\norml{\jm{\frac{\pa u_h}{\pa\tau_e^j}}}{e}=\norml{\frac{\pa \jm{u_h}}{\pa\tau_e^j}}{e}$ in \eqref{e3.12} and \eqref{e3.14} ought be estimated differently
by using the inverse inequality. This then leads to the following weaker
stability estimate:
\begin{align}\label{e3.3a}
&\norml{u_h}{\Om} + \frac{1}{k} \|u_h\|_{1,h}\\
&\quad\lesssim\max_{e\in\cE_h^{ID}}\left(\frac1{k}+\frac1{k^2}+\frac{k^2+1}{\ga_{0,e}k^2}
+\frac{1+\ga_{0,e}^2(1+h_e)+\ga_{0,e}h_e}{\ga_{0,e}(kh_e)^2}\right) M(f,g).\nn
\end{align}
\end{remark}

Since scheme \eqref{edg} is a linear complex-valued system,  an immediate consequence
of the stability estimates is the following well-posedness theorem for \eqref{edg}.

\begin{theorem}\label{existence}
The IPDG method \eqref{edg} has a unique solution for $k>0, h_e>0, \ga_{0,e}>0$,
$\sigma= 1$, $\ga_{1,e}\ge 0$ and $\be_{1,e}\ge 0$.
\end{theorem}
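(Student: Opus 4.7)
The scheme \eqref{edg} is a square linear system on the finite-dimensional complex vector space $V^h$, so existence for every data pair $(f,g)$ is equivalent to uniqueness, which in turn reduces to showing that the homogeneous equation has only the trivial solution. My plan, therefore, is to extract uniqueness directly from the a priori stability results already established in Section \ref{sec-sta}, and then invoke the finite-dimensional rank-nullity dichotomy to conclude existence.

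Suppose $u_h\in V^h$ solves \eqref{edg} with $f\equiv 0$ and $g\equiv 0$, so that $M(f,g)=0$. In the regime $\ga_{0,e}, \ga_{1,e}, \be_{1,e}>0$ covered by Theorem \ref{thm_sta}, inequality \eqref{e3.3} immediately yields $\norml{u_h}{\Om}\lesssim \csta M(f,g)=0$ (and in fact the entire left-hand side of \eqref{e3.3}, including $\norm{u_h}_{1,h}$, vanishes). This forces $u_h \equiv 0$ on every $K\in\cT_h$, and therefore $u_h = 0$ in $V^h$.

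The only subtlety lies in the boundary cases $\ga_{1,e}=0$ or $\be_{1,e}=0$ allowed in the statement, where the constant $\csta$ in \eqref{csta} formally blows up and so Theorem \ref{thm_sta} does not apply verbatim. This case is handled by the remark following Theorem \ref{thm_sta}: the jump terms $\norml{\jm{\pa u_h/\pa \tau_e^j}}{e}$ and $\norml{\jm{\pa u_h/\pa n_e}}{e}$ that were absorbed by the penalty bounds $\be_{1,e}/h_e$ and $\ga_{1,e}h_e$ in Step 2 of that proof can instead be controlled by the inverse inequality on $V^h$ (valid because $u_h$ is piecewise linear), yielding the alternative stability estimate \eqref{e3.3a}, whose coefficient is finite as soon as $\ga_{0,e}>0$. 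The same conclusion $\norml{u_h}{\Om}=0$ then follows, hence $u_h=0$. Combining the two regimes establishes uniqueness throughout the parameter range $\ga_{1,e},\be_{1,e}\ge 0$, and finite-dimensionality of $V^h$ upgrades uniqueness to existence.

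I do not expect a substantive obstacle here; the theorem is essentially a corollary of the stability analysis. The one point requiring care is simply bookkeeping for the degenerate-penalty case, where one must verify that the inverse-inequality substitution in the proof of Theorem \ref{thm_sta} goes through without requiring any mesh constraint beyond $h_e>0$ and $\ga_{0,e}>0$ — which it does, precisely because $V^h$ consists of piecewise linears on a shape-regular mesh.
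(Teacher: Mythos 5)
Your proposal is correct and is essentially the paper's own argument: the authors introduce Theorem~\ref{existence} as ``an immediate consequence of the stability estimates,'' i.e.\ the a priori bound \eqref{e3.3} (or \eqref{e3.3a} in the degenerate-penalty case) forces the homogeneous square linear system on the finite-dimensional space $V^h$ to have only the trivial solution, whence uniqueness and existence follow. Your extra care with the boundary cases $\ga_{1,e}=0$ or $\be_{1,e}=0$ via the inverse inequality and \eqref{e3.3a} is exactly the bookkeeping the paper delegates to the remark following Theorem~\ref{thm_sta}.
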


\begin{remark}
(a) IPDG method \eqref{edg} is well-posed for all wave number $k>0$
provided that the penalty parameter $\ga_{0,e}>0$. As a comparison, we recall
that \cite{ib95a} the finite element method is well-posed only if mesh size $h$
satisfies a constraint $h=O(k^{-r})$ for some $r\geq 1$, hence, the existence
is only guaranteed for very small mesh size $h$ when wave number $k$ is large.

(b). It is well known that \cite{arnold82,abcm01,baker77,fk07,rwg99,w78}
symmetric IPDG methods for coercive elliptic and parabolic PDEs
often require the penalty parameter $\ga_{0,e}$ is sufficiently
large to guarantee the well-posedness of numerical solutions, and the
low bound for $\ga_{0,e}$ is hard to determine and is also problem-dependent.
However, this is no issue for scheme \eqref{edg}, which solves
the (indefinite) Helmholtz equation, because it is well-posed for
all $\ga_{0,e}>0$.
\end{remark}

We have the following consequence of Theorem~\ref{thm_sta} for quasi-uniform meshes.
\begin{theorem}\label{thm_sta_uniform}Let $h=\max h_e.$ Suppose the mesh $\cT_h$ is quasi-uniform, that is $h_e\simeq h.$ Assume that $\ga_{1,e}\simeq\ga_1>0$,
and that $\ga_{0,e}\simeq (k^2h)^{2/3}\ga_1^{1/3}$
and $\be_{1,e} \gtrsim (h/k)^{2/3}\ga_1^{1/3}$ for $e\in\cE_h^I$, and that $\ga_{0,e}\simeq \ga_0^D>0$
and $\be_{1,e} \gtrsim \ga_0^D$ for $e\in\cE_h^D$, where $\ga_1$ and $\ga_0^D$ are independent of $e$. If $k\gtrsim 1$, then
\begin{equation}\label{esta}
\norml{u_h}{\Om}+\frac1k\norm{u_h}_{1,h}\lesssim
\left(\frac1k+\frac{1}{k^2}\left(\frac{1}{\ga_0^D}
+\frac{\ga_0^D}{h}\right)+\frac{1}{(k^2h)^{2/3}\ga_1^{1/3}}\right) M(f,g).
\end{equation}
If, furthermore, $\ga_0^D\simeq 1$ and $\ga_1\lesssim k^2h$, then
\begin{equation}\label{esta1}
\norml{u_h}{\Om}+\frac1k\norm{u_h}_{1,h}\lesssim
\left(\frac1k+\frac{1}{(k^2h)^{2/3}\ga_1^{1/3}}\right) M(f,g).
\end{equation}
\end{theorem}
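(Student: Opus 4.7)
The plan is to derive \eqref{esta} by direct substitution of the prescribed penalty parameters into the general upper bound for $\csta$ in \eqref{csta} from Theorem~\ref{thm_sta}, and then to obtain \eqref{esta1} by further simplification under the additional hypotheses. No new analytic ideas are required: the proof is an exercise in tracking exponents of $k$ and $h$.

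First I would handle the contribution from the Dirichlet edges/faces. With $\ga_{0,e}\simeq \ga_0^D$, $h_e\simeq h$, and $\be_{1,e}\gtrsim \ga_0^D$, each of the four quantities inside the $\cE_h^D$ max simplifies:
\begin{equation*}
\frac{1}{\ga_{0,e}}\simeq \frac{1}{\ga_0^D},\qquad
\frac{\ga_{0,e}}{h_e}\simeq \frac{\ga_0^D}{h},\qquad
\sqrt{\frac{\ga_{0,e}}{\be_{1,e}}}\lesssim 1,\qquad
\frac{1}{\be_{1,e}}\lesssim \frac{1}{\ga_0^D}.
\end{equation*}
Hence the $\cE_h^D$ max in \eqref{csta} is bounded (up to constants) by $\frac{1}{\ga_0^D}+\frac{\ga_0^D}{h}+1$.

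Next I would handle the interior edges/faces. Using $k\gtrsim 1$ (so $k^2+1\simeq k^2$) together with $\ga_{0,e}\simeq(k^2h)^{2/3}\ga_1^{1/3}$, $\ga_{1,e}\simeq\ga_1$, and $\be_{1,e}\gtrsim(h/k)^{2/3}\ga_1^{1/3}$, one computes
\begin{align*}
\frac{k^2+1}{\ga_{0,e}} &\simeq \frac{k^{2/3}}{h^{2/3}\ga_1^{1/3}}, \quad
\frac{1}{h_e}\sqrt{\frac{\ga_{0,e}}{\ga_{1,e}}} \simeq \frac{(k^2h)^{1/3}}{h\,\ga_1^{1/3}} = \frac{k^{2/3}}{h^{2/3}\ga_1^{1/3}},\\
\sqrt{\frac{\ga_{0,e}}{\be_{1,e}}} &\lesssim \sqrt{\frac{(k^2h)^{2/3}}{(h/k)^{2/3}}} = k,\qquad
\frac{1}{\be_{1,e}} \lesssim \frac{k^{2/3}}{h^{2/3}\ga_1^{1/3}}.
\end{align*}
Therefore the $\cE_h^I$ max is bounded by $\frac{k^{2/3}}{h^{2/3}\ga_1^{1/3}}+k$, and after multiplication by $1/k^2$ it contributes $\frac{1}{(k^2h)^{2/3}\ga_1^{1/3}}+\frac{1}{k}$. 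Combining with the two leading terms of \eqref{csta} and the Dirichlet contribution, and substituting into \eqref{e3.3}, yields \eqref{esta}.

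Finally, for \eqref{esta1} I would use the extra hypotheses $\ga_0^D\simeq 1$ and $\ga_1\lesssim k^2 h$. The first reduces the Dirichlet contribution to $\frac{1}{k^2}(1+\frac{1}{h})=\frac{1}{k^2}+\frac{1}{k^2h}$. The second gives $\ga_1^{1/3}\lesssim(k^2h)^{1/3}$, hence $\frac{1}{(k^2h)^{2/3}\ga_1^{1/3}}\gtrsim\frac{1}{k^2h}\ge\frac{1}{k^2}$ (the latter under the standing assumption that $h$ is bounded). Both remaining Dirichlet terms are therefore absorbed into $\frac{1}{(k^2h)^{2/3}\ga_1^{1/3}}$, producing \eqref{esta1}. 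The only mild subtlety is checking that the admissibility conditions $\be_{1,e}\gtrsim(h/k)^{2/3}\ga_1^{1/3}$ and $\be_{1,e}\gtrsim\ga_0^D$ are consistent with the penalty parameters being positive (they are), so there is no genuine obstacle in the argument—the main work is the algebraic bookkeeping above.
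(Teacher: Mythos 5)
Your proposal is correct and is exactly the argument the paper intends: the theorem is stated as a direct consequence of Theorem~\ref{thm_sta}, obtained by substituting the prescribed penalty parameters into the bound \eqref{csta} for $\csta$ and inserting the result into \eqref{e3.3}, with the simplifications for $\ga_0^D\simeq 1$ and $\ga_1\lesssim k^2h$ yielding \eqref{esta1}. Your exponent bookkeeping for both the $\cE_h^D$ and $\cE_h^I$ maxima checks out.
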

\begin{remark}
It is clear that if $k^2h\gtrsim 1$ and $\ga_1$ and $\ga_0^D$ are chosen properly, say $\ga_1\gtrsim \frac1{kh^2}$, $\ga_0^D\simeq \sqrt{h}$, then
\[\norml{u_h}{\Om}+\frac1k\norm{u_h}_{1,h}\lesssim \frac1k M(f,g).
\]
Note that the above estimate is of the same order as the PDE stability estimate given
in Theorem \ref{stability}.  But a large $\ga_1$ or a small $\ga_0^D$ may cause a
large error (cf. Theorem~\ref{thm main 2}).
\end{remark}

%%%%%%%%%%%%%%%%%%%%%%%%
\section{Error analysis}\label{sec-err}
In this section, we derive the error estimates for the solution
of scheme \eqref{edg}. This will be done in two steps.
First, we introduce an elliptic projection of the PDE solution
$u$ and derive error estimates for the projection.
We note that such a result also has an independent interest.
Second, we bound the error between the projection and the IPDG
solution by making use of the stability results obtained in
Section~\ref{sec-sta}.  In this section, we assume that the
mesh $\cT_h$ is quasi-uniform and $\ga_{1,e}\simeq\ga_1> 0$.
Also, we define
\[
\ga_0:=\min_{e\in\cE_h^{ID}}\ga_{0,e}\, (>0).
\]

\subsection{Elliptic projection and its error estimates}
For any $w\in E\cap H_{\Ga_D}^1(\Om)\cap H^2_{\loc} (\Ome)$, we define its
elliptic projection $\tilde{w}_h\in V^h$ by
\begin{equation}\label{e4.3}
a_h(\tilde{w}_h,v_h)+\i k\pd{\tilde{w}_h,v_h}_{\Ga_R} =a_h(w,v_h)+\i k\pd{w,v_h}_{\Ga_R}
\qquad\forall v_h\in V^h.
\end{equation}
In other words, $\tilde{w}_h$ is an IPDG approximation to
the solution $w$ of the following (complex-valued) Poisson problem:
\begin{alignat*}{2}
-\Del w &= F &&\qquad \mbox{in } \Ome,\\
w &= 0 &&\qquad \mbox{on }\Ga_D,\\
\frac{\p w}{\p n_\Om} +\i k w &=\psi &&\qquad \mbox{on }\Ga_R,
\end{alignat*}
for some given functions $F$ and  $\psi$ which are determined by $w$.

Before estimating the projection error, we state the following continuity and
coercivity properties for the sesquilinear form $a_h(\cdot,\cdot)$.
Since they follow easily from \eqref{eah}--\eqref{ea2}, so
we omit their proofs to save space.
\begin{lemma}\label{lem4.1}
For any $v\in E$ and $w\in E\cap H_{\Ga_D}^1(\Om)$, the mesh-dependent
sesquilinear form $a_h(\cdot,\cdot)$ satisfies
\begin{equation}
\abs{a_h(v,w)}, \;\abs{a_h(w,v)}\lesssim \norm{v}_{1,h} \norme{w}. \label{e4.1}
\end{equation}
In addition, for any $0<\ep<1$, there exists a positive constant
$c_\ep$ such that
\begin{equation}
\re a_h(v_h,v_h)+\Bigl(1-\ep+\frac{c_\ep}{\ga_0}\Bigr)
\im a_h(v_h,v_h)\ge (1-\ep)\norm{v_h}_{1,h}^2\quad \forall v_h\in V^h. \label{e4.2}
\end{equation}
\end{lemma}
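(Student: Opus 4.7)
The plan is to establish both \eqref{e4.1} and \eqref{e4.2} by decomposing $a_h$ into the volume, consistency, and penalty pieces prescribed by \eqref{eah}--\eqref{eL1} and applying weighted Cauchy--Schwarz, supplemented for coercivity by an elementary polynomial trace/inverse estimate on $V^h$.

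For the continuity bound \eqref{e4.1}, I would handle each summand separately. The volume term satisfies $\sum_{K\in\cT_h}(\nab v,\nab w)_K\le\abs{v}_{1,h}\abs{w}_{1,h}$. The two consistency edge terms in $b_h$ are dispatched by the weighted Cauchy--Schwarz
\[
\Bigl|\pdaj{\frac{\pa u}{\pa n_e}}{z}\Bigr|\le \Bigl(\frac{h_e}{\ga_{0,e}}\Bigr)^{\frac12}\norml{\av{\tfrac{\pa u}{\pa n_e}}}{e}\cdot\Bigl(\frac{\ga_{0,e}}{h_e}\Bigr)^{\frac12}\norml{\jm{z}}{e},
\]
so that the average factor is absorbed into $\norme{\cdot}$ and the jump factor into $\norm{\cdot}_{1,h}$. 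The three penalty pieces $J_0,J_1,L_1$ are likewise paired into matching weighted jump norms appearing in $\norm{\cdot}_{1,h}$. To obtain both $\abs{a_h(v,w)}$ and $\abs{a_h(w,v)}$ bounded by $\norm{v}_{1,h}\norme{w}$, I use that $w\in H_{\Ga_D}^1(\Om)\cap E$ forces $\jm{w}=0$ on every $e\in\cE_h^{ID}$ and $\jm{\tfrac{\pa w}{\pa\tau_e^j}}=0$ on every interior edge, with the $\Ga_D$ traces of both vanishing as well; this kills exactly those edge contributions in which a jump of $w$ would have had to be controlled only by $\norme{w}$, while averages of $w$ remain natively controlled by $\norme{w}$ and jumps of the normal derivative of $w$ are controlled by $\norm{w}_{1,h}\le\norme{w}$.

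For the coercivity \eqref{e4.2}, the identities \eqref{ea1} and \eqref{ea2} reduce the task to controlling the single cross term
\[
T:=2\re\sum_{e\in\cE_h^{ID}}\pdaj{\frac{\pa v_h}{\pa n_e}}{v_h}.
\]
The linearity of $v_h$ on each element gives the elementary trace/inverse bound $\norml{\av{\tfrac{\pa v_h}{\pa n_e}}}{e}\lesssim h_e^{-1/2}\abs{v_h}_{1,h,K_e\cup K_e'}$, which combined with Cauchy--Schwarz, the definition $\ga_0=\min_e\ga_{0,e}$, and Young's inequality yields
\[
|T|\le\ep\,\abs{v_h}_{1,h}^2+\frac{c_\ep}{\ga_0}J_0(v_h,v_h).
\]
Adding $(1-\ep+c_\ep/\ga_0)\im a_h(v_h,v_h)=(1-\ep+c_\ep/\ga_0)\bigl(J_0+J_1+L_1\bigr)(v_h,v_h)$ to $\re a_h(v_h,v_h)=\abs{v_h}_{1,h}^2-T$, the $-\tfrac{c_\ep}{\ga_0}J_0$ produced by $T$ is exactly cancelled by the $+\tfrac{c_\ep}{\ga_0}J_0$ coming from the new weight, and the residual $\tfrac{c_\ep}{\ga_0}(J_1+L_1)\ge 0$ can simply be dropped, delivering exactly $(1-\ep)\norm{v_h}_{1,h}^2$.

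I do not anticipate a real obstacle here. The only mild subtlety is to calibrate the Young weight so that the $J_0(v_h,v_h)$ coefficient bled from $T$ matches the prescribed $c_\ep/\ga_0$ in \eqref{e4.2}, and to observe that the surplus $\tfrac{c_\ep}{\ga_0}(J_1+L_1)$ is sign-definite and may be discarded. Everything else is routine bookkeeping of jumps, averages, and the polynomial inverse estimate on $V^h$; in particular, no mesh-size or wave-number condition enters.
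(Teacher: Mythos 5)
Your proof is correct and is exactly the routine argument the authors had in mind when they omitted the proof as following ``easily'' from \eqref{eah}--\eqref{ea2}: weighted Cauchy--Schwarz matched to the weights built into $\norm{\cdot}_{1,h}$ and $\norme{\cdot}$, the vanishing of $\jm{w}$ and $\jm{\pa w/\pa\tau_e^j}$ for $w\in H_{\Ga_D}^1(\Om)$ to kill the terms that would require $\av{\pa v/\pa n_e}$, and a trace/inverse estimate plus Young's inequality on the single cross term in $\re a_h(v_h,v_h)$. No gaps.
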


Let $u$ be the solution of problem \eqref{e1.1}--\eqref{e1.3}
and $\tuh$ be its elliptic projection defined as above. Then \eqref{e4.3} immediately implies the following
Galerkin orthogonality:
\begin{equation}\label{e4.4}
a_h(u-\tuh,v_h)+\i k\pd{u-\tuh,v_h}_{\Ga_R} =0 \qquad\forall v_h\in V^h.
\end{equation}

\begin{lemma}\label{lem-4.2}
Suppose problem \eqref{e1.1}--\eqref{e1.3} is $H^2$-regular. Then there hold the
following estimates:
\begin{align}
\norm{u-\tuh}_{1,h} + (\la k)^{\frac12} \norml{u-\tuh}{\Ga_R}
&\lesssim \la\big(\la+\ga_1+kh\big)^{\frac12} kh, \label{e4.2a}\\
\norml{u-\tuh}{\Om} &\lesssim \la\big(\la+\ga_1+kh\big) kh^2,  \label{e4.2b}
\end{align}
where $\la:=1+\frac{1}{\gamma_0}$.
\end{lemma}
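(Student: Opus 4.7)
The plan is a standard elliptic-projection argument adapted to the shifted sesquilinear form $A_h(u,v) := a_h(u,v)+\i k \pd{u,v}_{\Ga_R}$ associated with \eqref{e4.3}: I would decompose the error through an $H^1$-conforming interpolant, use the coercivity \eqref{e4.2} together with Galerkin orthogonality \eqref{e4.4} to bound the discrete part of the error, invoke standard interpolation estimates on the continuous part, and then close the $L^2$ estimate via a duality argument against an auxiliary Poisson--Robin problem.

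Concretely, let $I_h u \in V^h \cap H^1_{\Ga_D}(\Om)$ be a Lagrange (or Scott--Zhang) interpolant of $u$ that respects the Dirichlet condition, set $\eta := u - I_h u$ and $\xi_h := \tuh - I_h u \in V^h$, so that $u - \tuh = \eta - \xi_h$. Galerkin orthogonality applied to $v_h = \xi_h$ gives $A_h(\xi_h,\xi_h) = A_h(\eta,\xi_h)$. Taking real and imaginary parts and using $\im a_h(\xi_h,\xi_h) = J_0+J_1+L_1 \ge 0$, the coercivity \eqref{e4.2} (with, say, $\ep = 1/2$) lifts to the shifted form and yields
\[
\norm{\xi_h}_{1,h}^2 + \la k \norml{\xi_h}{\Ga_R}^2 \lesssim \la \bigl(|a_h(\eta,\xi_h)| + k \norml{\eta}{\Ga_R}\norml{\xi_h}{\Ga_R}\bigr).
\]
Continuity \eqref{e4.1} and Young's inequality then absorb the $\xi_h$-factors to give $\norm{\xi_h}_{1,h}^2 + \la k \norml{\xi_h}{\Ga_R}^2 \lesssim \la^2 \norme{\eta}^2 + \la k \norml{\eta}{\Ga_R}^2$. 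Plugging in the standard interpolation bounds (which follow from element-wise trace inequalities, continuity of $I_h u$ across interior edges, and the fact that $\eta$ vanishes on $\Ga_D$)
\[
\norme{\eta}^2 \lesssim (\la + \ga_1)\, h^2 \|u\|_{H^2(\Om)}^2, \qquad \norml{\eta}{\Ga_R}^2 \lesssim h^3 \|u\|_{H^2(\Om)}^2,
\]
and then applying the triangle inequality together with the $H^2$ regularity bound $\|u\|_{H^2(\Om)} \lesssim k M(f,g)$ from Theorem \ref{stability} (with $M(f,g)$ absorbed into the constant), produces \eqref{e4.2a}.

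For the $L^2$ estimate \eqref{e4.2b} I would use an Aubin--Nitsche duality argument. Let $w$ solve the adjoint Poisson--Robin problem $-\Del w = u - \tuh$ in $\Om$, $w = 0$ on $\Ga_D$, $\p_{n_\Om} w - \i k w = 0$ on $\Ga_R$, which, by the $H^2$-regularity assumption, satisfies $\|w\|_{H^2(\Om)} \lesssim \norml{u-\tuh}{\Om}$. Then $A_h(u-\tuh, w) = \norml{u-\tuh}{\Om}^2$, and Galerkin orthogonality lets us replace $w$ by $w - I_h w$. Bounding the right-hand side via continuity \eqref{e4.1}, the just-established bounds for $u-\tuh$ in the norms on the left of \eqref{e4.2a}, and the interpolation estimates for $w - I_h w$, then dividing through by $\norml{u-\tuh}{\Om}$, delivers \eqref{e4.2b} with one extra power of $h$ compared to \eqref{e4.2a}.

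The principal obstacle is the careful bookkeeping of the three sources of error contributing to the factor $\la + \ga_1 + kh$: the $1/\ga_0$ contribution to $\la$ arises from the $\sum_e (h_e/\ga_0)\norml{\av{\p_{n_e}\eta}}{e}^2$ piece of $\norme{\eta}^2$; the $\ga_1$ contribution from the weight $\ga_{1,e}h_e$ on the normal-derivative jump in $\norm{\cdot}_{1,h}$; and the $kh$ contribution from the boundary trace $k\norml{\eta}{\Ga_R}^2 \sim kh^3 \|u\|_{H^2}^2$, which also forces the factor $(\la k)^{1/2}$ on the boundary norm of the error. The non-Hermitian character of $A_h$ means the coercivity is valid only for discrete functions, so the decomposition $u-\tuh = \eta - \xi_h$ is essential; the $\la$ prefactor in \eqref{e4.2a} is traceable to the $(1 + c_\ep/\ga_0)$ weight needed in \eqref{e4.2} and propagates once through Young's inequality to give $\la^2\norme{\eta}^2$ on the right-hand side.
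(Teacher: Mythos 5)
Your proposal is correct and follows essentially the same route as the paper: decompose $u-\tuh$ through the $P_1$-conforming interpolant, use Galerkin orthogonality and the discrete coercivity \eqref{e4.2} with $\ep=\tfrac12$ to bound the discrete part $\tuh-\hat u_h$ in $\norm{\cdot}_{1,h}$ and in the weighted boundary norm, insert the interpolation estimates \eqref{e4.6}--\eqref{e4.6c} together with the $H^2$ bound from Theorem \ref{stability}, and finish the $L^2$ estimate by Nitsche duality against the adjoint Poisson--Robin problem \eqref{e4.7}. The bookkeeping of the factors $\la$, $\la+\ga_1$, and $kh$ that you describe matches the paper's \eqref{e4.6a} exactly.
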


\begin{proof}
Let $\hat{u}_h$ be the $P_1$-conforming finite element interpolation of $u$ on
the mesh $\cT_h$.  Then $\hat{u}_h\in E\cap H_{\Ga_D}^1(\Om)$ and satisfies the following
estimates (cf. \cite{bs94,ciarlet78}):
\begin{align}\label{e4.6}
\norml{u-\hat{u}_h}{\Om} &\lesssim h^2 \abs{u}_{H^2(\Om)}, \\
\norme{u-\hat{u}_h} &\lesssim \Big(1+\ga_1+\frac{1}{\ga_0}\Big)^{\frac12} \,h\abs{u}_{H^2(\Om)}=\big(\la+\ga_1\big)^{\frac12} \,h\abs{u}_{H^2(\Om)}, \label{e4.6b}\\
\norml{u-\hat{u}_h}{\Ga_R} &\lesssim h^{\frac32} \abs{u}_{H^2(\Om)}.
\label{e4.6c}
\end{align}
Let $\eta_h:=\tuh-\hat{u}_h$. From $\eta_h+u-\tuh=u-\hat{u}_h$ and \eqref{e4.4},
\begin{equation}\label{e4.5}
a_h(\eta_h,\eta_h)+\i k\pd{\eta_h,\eta_h}_{\Ga_R}
=a_h(u-\hat{u}_h,\eta_h)+\i k\pd{u-\hat{u}_h,\eta_h}_{\Ga_R}.
\end{equation}
Take  $\ep=\frac12$ in \eqref{e4.2} and assume without loss of generality
that $c_{\frac12}>\frac12$. It follows from \eqref{e4.2} and \eqref{e4.5} that
\begin{align*}
\frac12\norm{\eta_h}_{1,h}^2\le& \re a_h(\eta_h,\eta_h)
+\Bigl(\frac12+\frac{c_{\frac12}}{\ga_0}\Bigr)\im a_h(\eta_h,\eta_h)\\
=&\re\bigl(a_h(u-\hat{u}_h,\eta_h)+\i k\pd{u-\hat{u}_h,\eta_h}_{\Ga_R} \bigr)
-\Bigl(\frac12+\frac{c_{\frac12}}{\ga_0}\Bigr)k\pd{\eta_h,\eta_h}_{\Ga_R} \\
&\qquad +\Bigl(\frac12+\frac{c_{\frac12}}{\ga_0}\Bigr)
\im\left(a_h(u-\hat{u}_h,\eta_h)+\i k\pd{u-\hat{u}_h,\eta_h}_{\Ga_R} \right)\\
\le &C\la \Bigl(\norm{\eta_h}_{1,h}\norme{u-\hat{u}_h}
+k\norml{u-\hat{u}_h}{\Ga_R}^2\Bigr)-\frac{\la k}{4}\norml{\eta_h}{\Ga_R}^2.
\end{align*}
Therefore, it follows from \eqref{e4.6b}, \eqref{e4.6c} and \eqref{stability} that
\begin{align}\label{e4.6a}
\norm{\eta_h}_{1,h}^2 +\la k\norml{\eta_h}{\Ga_R}^2
\lesssim &\la^2\norme{u-\hat{u}_h}^2
+\la k\norml{u-\hat{u}_h}{\Ga_R}^2\nn\\
\lesssim&\la^2 k^2h^2\big(\la+\ga_1+kh\big).
\end{align}
which together with the fact that $u-\tuh=u-\hat{u}_h-\eta_h$ yields \eqref{e4.2a}.

To show \eqref{e4.2b}, we use the Nitsche's duality argument (cf. \cite{bs94,ciarlet78}).
Consider the following auxiliary problem:
\begin{alignat}{2}\label{e4.7}
-\De w &=u-\tuh &&\qquad\text{in }\Om,\\
w&=0 &&\qquad\text{on }\Ga_D, \nn \\
\frac{\pa w}{\pa n_\Om}-\i k w &=0 &&\qquad\text{on }\Ga_R. \nn
\end{alignat}
It can be shown that $w$ satisfies
\begin{equation}\label{e4.8}
\abs{w}_{H^2(\Om)}\lesssim \norml{u-\tuh}{\Om}.
\end{equation}
Let $\hat{w}_h$ be the $P_1$-conforming finite element interpolation of $w$ on
$\cT_h$. Testing the conjugated \eqref{e4.7} by $u-\tuh$ and using \eqref{e4.4} we get
\begin{align*}
\norml{u-\tuh}{\Om}^2 &=-(u-\tuh, \De w)=a_h(u-\tuh,w)+\i k\pd{u-\tuh,w}_{\Ga_R} \\
&=a_h(u-\tuh,w-\hat{w}_h)+\i k\pd{u-\tuh,w-\hat{w}_h}_{\Ga_R} \\
&\le \norm{u-\tuh}_{1,h}\norme{w-\hat{w}_h}
+k\norml{u-\tuh}{\Ga_R}\norml{w-\hat{w}_h}{\Ga_R} \\
&\lesssim \norm{u-\tuh}_{1,h} \big(\la+\ga_1\big)^{\frac12} h \abs{w}_{H^2(\Om)}
+k\norml{u-\tuh}{\Ga_R} h^{\frac32}\abs{w}_{H^2(\Om)}.
\end{align*}
which together with \eqref{e4.2a} and \eqref{e4.8} gives \eqref{e4.2b}.
The proof is completed.
\end{proof}

\subsection{Error estimates for scheme \eqref{edg}}
In this subsection we shall derive error estimates for scheme \eqref{edg}.
This will be done by exploiting the linearity of the Helmholtz equation
and making use of the stability estimates derived in Theorem \ref{thm_sta} and
the projection error estimates established in Lemma \ref{lem-4.2}.

Let $u$ and $u_h$ denote the solution of  \eqref{e1.1}--\eqref{e1.3} and that of \eqref{edg},
respectively. Assume that $u\in H^2(\Om).$ Then \eqref{2.6} holds for $v=v_h\in V^h.$ Define the error function $e_h:=u-u_h$. Subtracting \eqref{edg}
from \eqref{2.6} yields the following error equation:
\begin{equation}\label{error-eq}
a_h(e_h,v_h) -k^2 (e_h,v_h) +\i k \langle e_h,v_h\rangle_{\Ga_R} =0
\qquad \forall v_h\in V^h.
\end{equation}
Let $\tuh$ be the elliptic projection of $u$ as defined in the previous
subsection. Write $e_h=\eta-\xi$ with
\[
\qquad \eta:=u-\tuh, \qquad \xi:=u_h-\tuh.
\]
From \eqref{error-eq} and \eqref{e4.4} we get
\begin{align}\label{e4.14}
a_h(\xi,v_h) -k^2 (\xi,v_h) +\i k \langle \xi,v_h\rangle_{\Ga_R}
&= a_h(\eta,v_h) -k^2 (\eta,v_h) +\i k \langle \eta,v_h\rangle_{\Ga_R} \\
&=-k^2 (\eta,v_h) \qquad \forall v_h\in V^h. \nn
\end{align}
The above equation implies that $\xi\in V^h$ is the solution of
scheme \eqref{edg} with source terms $f=-k^2\eta$ and $g\equiv 0$.
Then an application of Theorem \ref{thm_sta} and Lemma \ref{lem-4.2}
immediately gives the following lemma.

\begin{lemma}\label{lem-4.3}
$\xi=u_h-\tuh$ satisfies the following estimate:
\begin{align}\label{est_xi}
&\norml{\xi}{\Om}+\frac{1}{k} \norm{\xi}_{1,h}
\lesssim \csta\la\big(\la+\ga_1+kh\big) k^3h^2,
\end{align}
where $\csta$ is defined in \eqref{csta} and $\la=1+\frac{1}{\gamma_0}.$
\end{lemma}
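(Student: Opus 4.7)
The plan is short and modular: observe that $\xi = u_h - \tilde{u}_h$ itself satisfies the IPDG scheme \eqref{edg} with specific data, then invoke the two big tools already established, namely the stability bound of Theorem~\ref{thm_sta} and the projection error bound of Lemma~\ref{lem-4.2}.

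First, I would reread \eqref{e4.14}. Its right-hand side is $-k^2(\eta, v_h)$, which has exactly the form $(F, v_h) + \langle G, v_h\rangle_{\Ga_R}$ with $F := -k^2\eta$ and $G := 0$. Since $\xi \in V^h$ and the sesquilinear form on the left of \eqref{e4.14} is identical to that of \eqref{edg}, we may regard $\xi$ as the IPDG solution with source data $(F, G) = (-k^2\eta, 0)$. Note that this step uses nothing beyond the definitions: the elliptic projection \eqref{e4.3} is built precisely so that the $\i k\langle \cdot,\cdot\rangle_{\Ga_R}$ boundary term cancels in \eqref{e4.14}, leaving only the $-k^2(\eta, v_h)$ residue on the right.

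Next, I apply Theorem~\ref{thm_sta} to $\xi$ with this data. The theorem yields
\begin{equation*}
\norml{\xi}{\Om} + \frac{1}{k}\norm{\xi}_{1,h} \lesssim \csta\, M(F, G) = \csta\, k^2 \norml{\eta}{\Om},
\end{equation*}
where I have dropped the (nonnegative) boundary terms on the left of \eqref{e3.3} that are not needed here, and used $M(F, 0) = \|F\|_{L^2(\Om)} = k^2\|\eta\|_{L^2(\Om)}$.

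Finally, I substitute the $L^2$ projection estimate \eqref{e4.2b} from Lemma~\ref{lem-4.2},
\begin{equation*}
\norml{\eta}{\Om} = \norml{u - \tuh}{\Om} \lesssim \la(\la + \ga_1 + kh)\, kh^2,
\end{equation*}
into the previous inequality to obtain
\begin{equation*}
\norml{\xi}{\Om} + \frac{1}{k}\norm{\xi}_{1,h} \lesssim \csta\, \la(\la + \ga_1 + kh)\, k^3 h^2,
\end{equation*}
which is exactly \eqref{est_xi}. There is no real obstacle: the only point to verify carefully is the identification of $\xi$ as an IPDG solution with the stated data, after which Theorem~\ref{thm_sta} and Lemma~\ref{lem-4.2} do all the work. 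The proof is complete in essentially three lines.
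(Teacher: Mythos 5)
Your proposal is correct and follows exactly the paper's own argument: equation \eqref{e4.14} identifies $\xi$ as the IPDG solution with data $f=-k^2\eta$, $g\equiv 0$, after which Theorem~\ref{thm_sta} gives $\norml{\xi}{\Om}+\frac1k\norm{\xi}_{1,h}\lesssim \csta k^2\norml{\eta}{\Om}$ and the $L^2$ projection estimate \eqref{e4.2b} finishes the proof. No gaps.
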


We are ready to state our error estimate results for scheme \eqref{edg},
which follows from Lemma \ref{lem-4.3},  Lemma \ref{lem-4.2} and an
application of the triangle inequality.

\begin{theorem}\label{thm_main}
Let $u$ and $u_h$ denote the solutions of \eqref{e1.1}--\eqref{e1.3}
and \eqref{edg}, respectively. Assume that $u\in H^2(\Om).$
Then there exist two positive constants $C_1$ and $C_2$ such that the following error estimates hold.
\begin{align}\label{e4.15}
\norm{u-u_h}_{1,h} &\le \la\big(\la+\ga_1+kh\big)\big(C_1 kh+C_2\csta k^4h^2\big),  \\
\norml{u-u_h}{\Ome} &\le  \la\big(\la+\ga_1+kh\big) \big(C_1 kh^2+C_2\csta k^3h^2\big),\label{e4.16}
\end{align}
where $\csta$ is defined in Theorem~\ref{thm_sta} and $\la=1+\frac{1}{\gamma_0}.$
\end{theorem}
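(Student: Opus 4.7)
The plan is simply to combine the triangle inequality with the two preparatory results already in hand: the projection error estimates in Lemma~\ref{lem-4.2} for $\eta = u - \tuh$, and the bound in Lemma~\ref{lem-4.3} for $\xi = u_h - \tuh$ obtained by viewing $\xi$ as the IPDG solution with data $(-k^2\eta,0)$ and applying the stability estimate in Theorem~\ref{thm_sta}. Since $e_h = u - u_h = \eta - \xi$, the triangle inequality immediately gives
\begin{equation*}
\norm{u-u_h}_{1,h} \le \norm{\eta}_{1,h} + \norm{\xi}_{1,h},
\qquad
\norml{u-u_h}{\Om} \le \norml{\eta}{\Om} + \norml{\xi}{\Om}.
\end{equation*}

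For the broken $H^1$-norm, I would insert the bound $\norm{\eta}_{1,h} \lesssim \la(\la+\ga_1+kh)^{1/2}\,kh$ from \eqref{e4.2a}, and from Lemma~\ref{lem-4.3} the bound $\norm{\xi}_{1,h} \lesssim \csta\,\la(\la+\ga_1+kh)\,k^4h^2$. For the $L^2$-norm I would use \eqref{e4.2b} for $\eta$ and the $\norml{\cdot}{\Om}$ piece of \eqref{est_xi} for $\xi$. These are exactly in the form asserted in \eqref{e4.15}--\eqref{e4.16}, modulo matching the leading factor.

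The only cosmetic point to reconcile is that Lemma~\ref{lem-4.2} carries the factor $(\la+\ga_1+kh)^{1/2}$, while Lemma~\ref{lem-4.3} (and hence the bound for $\xi$) carries the full factor $(\la+\ga_1+kh)$. I would observe that $\la = 1 + 1/\ga_0 \ge 1$, so $\la+\ga_1+kh \ge 1$ and therefore $(\la+\ga_1+kh)^{1/2} \le \la+\ga_1+kh$; this allows both contributions to be absorbed into the common prefactor $\la(\la+\ga_1+kh)$ that appears on the right-hand side of \eqref{e4.15}--\eqref{e4.16}.

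I do not anticipate any real obstacle here: the substantive work has been done in Theorem~\ref{thm_sta} (stability) and Lemma~\ref{lem-4.2} (projection estimates), and the linearity of the scheme, together with the Galerkin orthogonality \eqref{e4.4} for the projection, is what reduces the error analysis to controlling $\xi$ via the stability estimate applied with right-hand side $-k^2\eta$ and zero boundary data. The step most worth writing out carefully is the identification of $\xi$ as a discrete solution of \eqref{edg} with data $(-k^2\eta,0)$ via \eqref{e4.14}, which is the bridge that makes the stability result available; everything after that is a triangle-inequality bookkeeping step.
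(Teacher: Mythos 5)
Your proposal is correct and coincides with the paper's own (very brief) argument: the authors likewise obtain Theorem~\ref{thm_main} by combining Lemma~\ref{lem-4.2}, Lemma~\ref{lem-4.3} and the triangle inequality applied to $e_h=\eta-\xi$. Your extra remark reconciling the factor $(\la+\ga_1+kh)^{1/2}$ from \eqref{e4.2a} with the full factor $(\la+\ga_1+kh)$ via $\la\ge 1$ is exactly the right bookkeeping step, which the paper leaves implicit.
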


From Theorem \ref{thm_sta_uniform} and the definition of $\csta$ 
(cf. \eqref{csta}) we obtain the following estimates.
\begin{theorem}\label{thm main 2} 
Assume that $0<\ga_1\lesssim k^2h$,
and that $\ga_{0,e}\simeq (k^2h)^{2/3}\ga_1^{1/3}$
and $\be_{1,e} \gtrsim (h/k)^{2/3}\ga_1^{1/3}$ for $e\in\cE_h^I$, and that $\ga_{0,e}\simeq 1$
and $\be_{1,e} \gtrsim 1$ for $e\in\cE_h^D$. If $k\gtrsim 1$, then there exist two positive constants $C_1$ and $C_2$ such that the following error estimates hold.
\begin{align}\label{e4.17}
\norm{u-u_h}_{1,h} &\le \la\big(\la+\ga_1+kh\big)\Big(C_1kh + C_2\Big(\frac1k+\frac{1}{(k^2h)^{2/3}\ga_1^{1/3}}\Big)k^4h^2\Big), \\
\norml{u-u_h}{\Ome} &\le  \la\big(\la+\ga_1+kh\big) \Big(C_1 kh^2+C_2\Big(\frac1k+\frac{1}{(k^2h)^{2/3}\ga_1^{1/3}}\Big) k^3h^2\Big). \label{e4.18}
\end{align}

\end{theorem}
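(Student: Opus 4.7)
The plan is to recognize that Theorem \ref{thm main 2} is a direct corollary of the abstract error bounds \eqref{e4.15}--\eqref{e4.16} from Theorem \ref{thm_main} and the explicit bound on $\csta$ that is obtained by evaluating \eqref{csta} under the present choice of penalty parameters (which, not coincidentally, is exactly the scaling already identified in Theorem \ref{thm_sta_uniform} for the optimal stability estimate \eqref{esta1}). Thus no new analysis is needed; I only need to (i) specialize the stability constant, and (ii) substitute.

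First I would check that the hypotheses on $\ga_{0,e},\ga_{1,e},\be_{1,e}$ match precisely those of Theorem \ref{thm_sta_uniform}: on the interior edges one has $\ga_{0,e}\simeq (k^2h)^{2/3}\ga_1^{1/3}$, $\ga_{1,e}\simeq \ga_1$ and $\be_{1,e}\gtrsim (h/k)^{2/3}\ga_1^{1/3}$, and on the Dirichlet edges one has $\ga_0^D\simeq 1$ and $\be_{1,e}\gtrsim 1$; also $\ga_1\lesssim k^2h$ and $k\gtrsim 1$ hold. Under these assumptions I would estimate each of the eight terms appearing inside the two $\max$'s in \eqref{csta}. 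For $e\in\cE_h^D$ every term is $\lesssim 1/h$; multiplied by $1/k^2$ this contributes $1/(k^2h)$, which is absorbed by $1/((k^2h)^{2/3}\ga_1^{1/3})$ since $\ga_1\lesssim k^2h$. For $e\in\cE_h^I$ the four terms
\[
\frac{k^2+1}{\ga_{0,e}},\qquad \frac{1}{h_e}\sqrt{\frac{\ga_{0,e}}{\ga_{1,e}}},\qquad \sqrt{\frac{\ga_{0,e}}{\be_{1,e}}},\qquad \frac{1}{\be_{1,e}}
\]
evaluate respectively to $\lesssim k^{2/3}/(h^{2/3}\ga_1^{1/3})$, $\lesssim k^{2/3}/(h^{2/3}\ga_1^{1/3})$, $\lesssim k$ and $\lesssim k^{2/3}/(h^{2/3}\ga_1^{1/3})$. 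After multiplication by $1/k^2$ the third contributes the term $1/k$ already present in \eqref{csta}, while the others contribute $1/((k^2h)^{2/3}\ga_1^{1/3})$. Combining, I obtain
\[
\csta \lesssim \frac{1}{k}+\frac{1}{(k^2h)^{2/3}\ga_1^{1/3}},
\]
which is exactly \eqref{esta1}.

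With this bound in hand, I would simply substitute it into \eqref{e4.15} and \eqref{e4.16}: the quantity $\csta k^4h^2$ in the $H^1$-type bound is replaced by $(1/k + 1/((k^2h)^{2/3}\ga_1^{1/3}))\,k^4h^2$ and, analogously, $\csta k^3h^2$ in the $L^2$ bound is replaced by $(1/k + 1/((k^2h)^{2/3}\ga_1^{1/3}))\,k^3h^2$. These yield \eqref{e4.17} and \eqref{e4.18} verbatim, with the generic constants $C_1,C_2$ inherited from Theorem \ref{thm_main}.

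There is essentially no obstacle beyond careful bookkeeping; the one place where a small check is needed is the term $\sqrt{\ga_{0,e}/\be_{1,e}}$ on interior edges, which naively is as large as $k$ under the given scalings. The point is that, once multiplied by the outer $1/k^2$ from \eqref{csta}, it produces only $1/k$, which is already present in the target bound and is in fact responsible for the $1/k$ in \eqref{esta1}. All other terms are dominated by the $1/\be_{1,e}$-contribution, giving the $1/((k^2h)^{2/3}\ga_1^{1/3})$ summand; the hypothesis $\ga_1\lesssim k^2h$ is used exactly once, to absorb the Dirichlet-edge contribution $1/(k^2h)$ into this summand.
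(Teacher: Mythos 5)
Your proposal is correct and follows exactly the route the paper takes: the paper derives Theorem \ref{thm main 2} by combining Theorem \ref{thm_main} with the specialization of $\csta$ from \eqref{csta} under the stated parameter scalings (equivalently, the bound \eqref{esta1} of Theorem \ref{thm_sta_uniform}), which is precisely your computation. Your term-by-term evaluation of the two maxima in \eqref{csta}, including the observation that $\sqrt{\ga_{0,e}/\be_{1,e}}\lesssim k$ yields the $1/k$ summand and that $\ga_1\lesssim k^2h$ absorbs the Dirichlet-edge contribution $1/(k^2h)$, supplies the bookkeeping the paper leaves implicit.
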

\begin{remark}
(a) The estimates in \eqref{e4.15}--\eqref{e4.18} are so-called preasymptotic
error estimates (i.e. for the mesh in the regime $k^2h\gtrsim 1$). In fact,
the estimates hold for any $h>0$. We recall that \cite{ib95a} the preasymptotic
error estimates for the finite element method solution
was only proved in the $1$-d case provided that $kh\le 1$.

(b) The second term on the right-hand side of the first inequality is pollution
term for $\norm{u-u_h}_{1,h}$.

(c) If $kh\lesssim 1$, then under the assumption of Theorem~\ref{thm main 2} 
we have 
\begin{equation}\label{e4.19}
\norm{u-u_h}_{1,h} \le \widetilde{C}_1kh + \widetilde{C}_2k^{8/3}h^{4/3}
\end{equation}
for some constants $\widetilde{C}_1$ and $\widetilde{C}_2$ which depend on $\ga_1.$
Numerical tests in the next section suggest that $\norm{u-u_h}_{1,h}$
may have a better bound $\widetilde{C}_1kh + \widetilde{C}_2k^{3}h^{2}$ and
it is possible to tune the penalty parameters to significantly reduce the
pollution error.  We note that in the case $k^2h$ is sufficiently small, 
optimal order (with respect to $h$) error estimate in the 
broken $H^1$-norm can be derived by using the Schatz argument 
as done in \cite{perugia07, ak79, dssb93, dss94}.

(d) Inequality \eqref{est_xi} shows that $\norm{\tuh-u_h}_{1,h}$ enjoys a
superconvergence.
\end{remark}

%%%%%%%%%%%%%%%%%%%%
\section{Numerical experiments}\label{sec-num}
Throughout this section, we consider the following two-dimensional
Helmholtz problem:
\begin{alignat}{2}
-\Del u - k^2 u &=f:=\frac{\sin(kr)}{r}  &&\qquad\mbox{in  } \Om,\label{e6.1}\\
\frac{\pa u}{\pa n_\Om} +\i k u &=g &&\qquad\mbox{on } \Ga_R:=\pa\Om.\label{e6.2}
\end{alignat}
Here $\Om$ is the unit regular hexagon with center $(0,0)$ (cf. Figure~\ref{f1})
and $g$ is so chosen that the exact solution is
\begin{equation}\label{e6.3}
u=\frac{\cos(kr)}{k}-\frac{\cos k+\i\sin k}{k\big(J_0(k)+\i J_1(k)\big)}J_0(kr)
\end{equation}
in polar coordinates, where $J_\nu(z)$ are Bessel functions of the first kind.
\begin{figure}[ht]
\centerline{\includegraphics[scale=0.8]{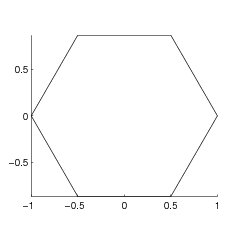}\includegraphics[scale=0.8]{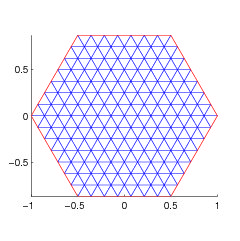}}
\caption{Geometry (left) and a sample mesh $\cT_{1/7}$ that consists of
congruent and equilateral triangles of size $h=1/7$ (right) for
Example 1.}\label{f1}
\end{figure}

For any positive integer $m$, let $\cT_{1/m}$ denote the regular triangulation
that consists of $6m^2$ congruent and equilateral triangles of size $h=1/m$.
See Figure~\ref{f1} (right) for a sample triangulation $\cT_{1/7}$.

%%%%
\subsection{Stability}\label{ssec-1}
In this subsection, we use the following penalty parameters for the
symmetric IPDG method (cf. \eqref{edg}) according to
Theorem~\ref{thm_sta_uniform} (or \ref{thm main 2}):
\begin{equation}\label{e6.4}
\ga_{1,e}=0.1, \quad \ga_{0,e}=(k^2h)^{2/3}\ga_1^{1/3},
\quad\text{ and }\be_{1,e}=1.
\end{equation}

Given a triangulation $\cT_h$, let $u_h^{\rm FEM}$ be the $P_1$-conforming
finite element approximation of the problem \eqref{e6.1}--\eqref{e6.2}.
Recall that $u_h$ denotes the IPDG solution. Figure~\ref{f1a} plots
the $H^1$-seminorm of the IPDG solution $\norm{u_h}_{1,h}$, the $H^1$-seminorm
of the finite element solution
$\abs{u_h^{\rm FEM}}_{H^1(\Om)}$ for $h=0.05$ and $0.005$, respectively,
and the $H^1$-seminorm of the exact solution $\abs{u}_{H^1(\Om)}$ for
$k=1, \cdots, 230.$ It is shown that
\begin{equation}\label{e6.5}
 \abs{u}_{H^1(\Om)}\simeq 1, \qquad \norm{u_h}_{1,h}\lesssim 1,
\qquad \abs{u_h^{\rm FEM}}_{H^1(\Om)}\lesssim 1.
\end{equation}
We notice that the stability estimate $\norm{u_h}_{1,h}\lesssim 1$ implies
that $\norml{u_h}{\Om}\lesssim 1/k$. These stability estimates are better
than those given by Theorem~\ref{thm_sta_uniform}.

\begin{figure}[ht]
\centerline{\includegraphics[scale=0.8]{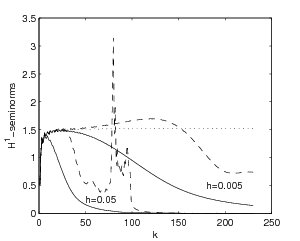}}
\caption{$\norm{u_h}_{1,h}$ (solid), $\abs{u_h^{\rm FEM}}_{H^1(\Om)}$  (dashed)
for $h=0.05$ and $0.005$, respectively. The dotted line gives the
$H^1$-seminorm of the exact solution $\abs{u}_{H^1(\Om)}$.}\label{f1a}
\end{figure}

%%%%%
\subsection{Error of the finite element interpolation}
Given a triangulation $\cT_h$, let  $\hat{u}_h$ be the $P_1$-conforming
finite element interpolation of $u$ on $\cT_h$. Consider in
Figure~\ref{f2} log-log plots of the relative error
$\hat{e}(h,k):=\abs{u-\hat{u}_h}_1\big/\abs{u}_1$ of the finite
element interpolation in $H^1$-seminorm for different $k$ versus $1/h$.
Similar to the 1-D case \cite{ib95a}, All error curves decay with constant
slope of $-1$. Note that the error stays at around 100\% on coarse mesh and
starts to decrease at a certain mesh size. We are interested in the mesh size
where the descent starts. Similar to ``the critical number of degrees of
freedom"  introduced in \cite{ib95a}, we introduce the following
definition of \emph{critical mesh size}.
\begin{definition}\label{def2}
Define---for any fixed $k$ and $f$---the critical mesh size as maximum
mesh size $H(k,f)$ for which
\begin{remunerate}
\item $\tilde{e}(h,k)<1$ for $h<H(k,f)$, and
\item $\tilde{e}(h,k)\to 0$  as $h\to 0.$
\end{remunerate}
\end{definition}
\begin{figure}[ht]
\centerline{\includegraphics[scale=0.8]{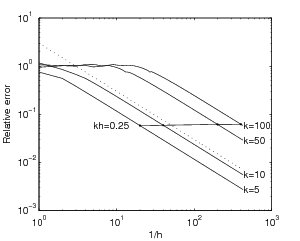}}
%\vspace{0.5cm}
\caption{Relative error of the finite element interpolation in $H^1$-seminorm
for $k=5, k=10, k=50,$ and $k=100$. The dotted line gives the reference line
with slope $-1$.}\label{f2}
\end{figure}

Recall that the critical mesh size for the one dimensional case is one
half of the wavelength, that is,  $\frac{\pi}{k}$ \cite{ib95a}. Since the
solution $u$ is axial symmetric (cf. \eqref{e6.3}) and the trace along
any direction may be resolved by a mesh with mesh size less
than $\frac{\pi}{k}$, the critical mesh size  for the finite
element interpolation should be greater than or equal to $\frac{\pi}{k}$.
Figure~\ref{f3} plots the reciprocal of the critical mesh size for the
finite element interpolation computed for all integer $k$ from $1$ to $230$ and
the line passing through the origin has slope $\frac{1}{\pi\sqrt{3}}$.
It shows that
\begin{equation}\label{emeshsizei}
\text{ the critical mesh size for } \hat{u}_h\approx \frac{\sqrt{3}\pi}{k}.
\end{equation}

\begin{figure}[ht]
\centerline{\includegraphics[scale=0.8]{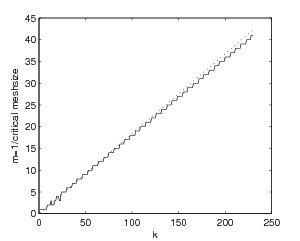}}
%\vspace{0.5cm}
\caption{Reciprocal of the critical mesh size of the relative error of the
finite element interpolation in $H^1$-seminorm computed
for $k=1, \cdots, 230$. The dotted line gives the line through the origin
with slope $\frac{1}{\pi\sqrt{3}}$.}\label{f3}
\end{figure}

Figure~\ref{f2} also shows that the error of the finite element
interpolation is controlled by the magnitude $k h$. For illustration,
the points that are computed from $k h=0.25$ are connected. The connecting
line does neither increase nor decrease significantly with the change of $k$.
For more detailed observation, the relative errors of the finite element
interpolations, computed for all integer $k$ from $1$ to $230$  for
$k h=1$ and $k h=0.5$, are plotted in Figure~\ref{f4}. The error for
$k h=1$ stays around $0.247$ and the error for $k h=0.25$ stays around
$0.124$. Note that $0.124/0.247\approx 0.5$ which verifies that the
relative error of the finite element interpolation in $H^1$-seminorm
satisfies $\hat{e}(h,k)=O(k h).$
\begin{figure}[ht]
\centerline{\includegraphics[scale=0.8]{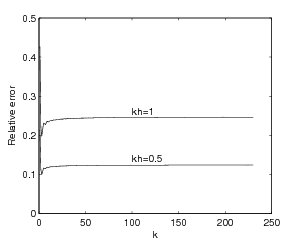}}
%\vspace{0.5cm}
\caption{Relative errors of the finite element interpolations in
$H^1$-seminorm computed for $k=1, \cdots, 230$ with mesh size $h$
determined by $kh=1$ and $kh=0.5$, respectively.}\label{f4}
\end{figure}

%%%%
\subsection{Error of the DG solution}
From Theorem~\ref{thm_main}, the stability estimates in Subsection~\ref{ssec-1}
suggest that the error of the IPDG solution in $H^1$-seminorm could be bounded by
\begin{equation}\label{e6.6}
\abs{u-u_h}_{1,h}\le (1+kh)\big(C_1kh+C_2k^3h^2\big)
\end{equation}
for some constants $C_1$ and $C_2$. The second term on the right hand
side is the so-called pollution error. We now present numerical results 
which verify the above error bound.

In Figure~\ref{f5}, the relative error of the IPDG solution with parameters
given by \eqref{e6.4} and the relative error of the finite element
interpolation are displayed in one plot. The relative error of the
IPDG solution stays around $100\%$ before a critical mesh size is reached,
then decays slowly on a range increasing with $k$, and then decays at a
rate greater than $-1$ in the log-log scale but converges as fast as the
finite element interpolation (with slope $-1$) for small $h$. The relative
error grows with $k$ along line $k h=0.25.$ Unlike the error of the finite
element interpolation, the error of the IPDG solution is not controlled by
the magnitude of $k h$ --- see also Figure~\ref{f6}.

\begin{figure}[ht]
\centerline{\includegraphics[scale=0.8]{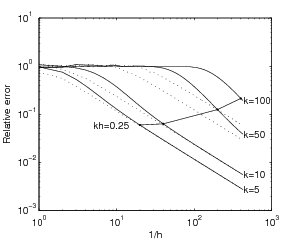}}
%\vspace{0.5cm}
\caption{Relative error of the IPDG solution  with parameters
given by \eqref{e6.4} (solid) and relative error of the finite element
interpolation (dotted) in $H^1$-seminorm for $k=5, k=10, k=50,$ and $k=100$,
respectively. }\label{f5}
\end{figure}
\begin{figure}[ht]
\centerline{\includegraphics[scale=0.8]{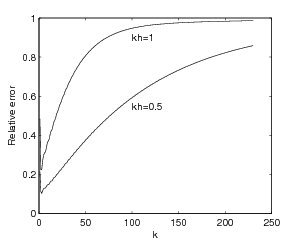}}
%\vspace{0.5cm}
\caption{Relative errors of the IPDG solutions  with parameters given
by \eqref{e6.4} in $H^1$-seminorm computed for $k=1, \cdots, 230$
with mesh size $h$  determined by $kh=1$ and  $kh=0.5$, respectively.}\label{f6}
\end{figure}

Figure~\ref{f7} plots the relative error of the IPDG solution with parameters
given by \eqref{e6.4} for $k=5^{2/3}, 10^{2/3}, \cdots, 500^{2/3}$ and $h$
determined by $k^3h^2=1$. The error does not increase with respect to $k$
which verifies \eqref{e6.6}.
\begin{figure}[ht]
\centerline{\includegraphics[scale=0.8]{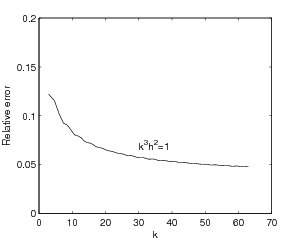}}
%\vspace{0.5cm}
\caption{Relative errors of the IPDG solutions with parameters given
by \eqref{e6.4} in $H^1$-seminorm computed for $k=5^{2/3}, 10^{2/3},
\cdots, 500^{2/3}$ with mesh size $h$  determined by $k^3h^2=1$.}\label{f7}
\end{figure}

Figure~\ref{f8} plots the reciprocal of the critical mesh size for the
IPDG solution with parameters given by \eqref{e6.4} computed for all integer
$k$ from $1$ to $230$ and the lines passing through the origin have slopes
$\frac{1}{1.35\pi}$ and $\frac{1}{\pi}$, respectively. It shows that
\begin{equation}\label{emeshsizedg}
 \text{ the critical mesh size for } u_h\approx \frac{1.35\pi}{k}
\end{equation}
with two exceptions but they are still less than $\frac{\pi}{k}.$
It is interesting that the dependence on $1/k$ is essentially linear.
We consider the IPDG solution  with parameters given by \eqref{e6.4}
for $k=100$ on the mesh with mesh size $h=1/60$. The relative error
in $H^1$-seminorm is about 0.9898. Figure~\ref{f8a} presents the surface
plots of the interpolation (left) and the IPDG solution (right).
It is shown that the IPDG solution has a correct shape although
its amplitude is not very accurate.

\begin{figure}[ht]
\centerline{\includegraphics[scale=0.8]{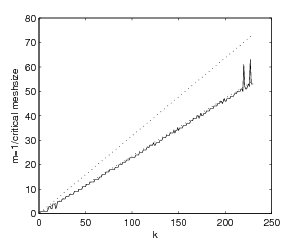}}
%\vspace{0.5cm}
\caption{Reciprocal of the critical mesh size of the relative error of the
IPDG solution  with parameters given by \eqref{e6.4} in $H^1$-seminorm
computed for $k=1, \cdots, 230$. The dotted lines give the lines
through the origin with slopes $\frac{1}{1.35\pi}$ and $\frac{1}{\pi}$.}\label{f8}
\end{figure}

\begin{figure}[ht]
\centerline{\includegraphics[scale=0.8]{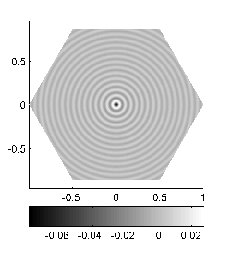}\includegraphics[scale=0.8]{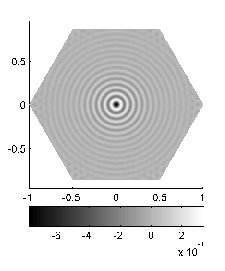}}
%\vspace{0.5cm}
\caption{Surface plots of the interpolation (left) and the IPDG
solution (right)  with parameters given by \eqref{e6.4} for $k=100$
on the mesh with mesh size $h=1/60$.}\label{f8a}
\end{figure}

%%%%%
\subsection{Sensitivity of the error bounds with respect to penalty parameters}
\label{sec-6.4}
In this subsection, we examine the sensitivity of the error of the
IPDG solution in $H^1$-seminorm with respect to the
parameters $\ga_{0,e}$, $\be_{1,e}$, and $\ga_{1,e}$, respectively.

First, we examine the sensitivity of the error in $\ga_{0,e}$.
To the end, for $k=5$ and $k=50$, respectively, we
fix $\ga_{1,e}=0.1$ and $\be_{1,e}=1$,
and compute the IPDG solution with the following $\ga_{0,e}$:
$\ga_{0,e}=(k^2h)^{2/3}\ga_{1,e}^{1/3}$
(see \eqref{e6.4}), $\ga_{0,e}=1$, $\ga_{0,e}=0.01$, and $\ga_{0,e}=100$.
Figure~\ref{f9} plots the relative error of the IPDG solution
for each run. We observe that the error in the $H^1$-seminorm is
not sensitive with respect to the parameter $\ga_{0,e}$. It is
clear that $\ga_{0,e}$ affects the continuity of the solution.
The larger the parameter $\ga_{0,e}$, the more continuous the IPDG solution.
\begin{figure}[ht]
\centerline{\includegraphics[scale=0.8]{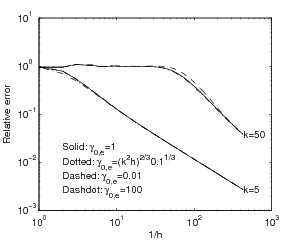}}
%\vspace{0.5cm}
\caption{Relative error of the IPDG solution with parameters
$\ga_{1,e}=0.1, \be_{1,e}=1$, and each of following $\ga_{0,e}$:
$\ga_{0,e}=(k^2h)^{2/3}\ga_{1,e}^{1/3}$ (dotted), $\ga_{0,e}=1$ (solid),
$\ga_{0,e}=0.01$ (dashed), and $\ga_{0,e}=100$ (dashdot) in the
$H^1$-seminorm for $k=5$ and $k=50$, respectively. }\label{f9}
\end{figure}

Secondly, we test the sensitivity of the error in $\be_{1,e}$.
Figure~\ref{f9a} plots the relative error in the $H^1$-seminorm of the IPDG
solution with parameters $\ga_{0,e}=1, \ga_{1,e}=0.1$, and each of the
following $\be_{1,e}$: $\be_{1,e}=0, 1, 100$, for $k=5$ and $k=50$,
respectively. Again, we observe that the error in the $H^1$-seminorm
is not sensitive with respect to the parameter $\be_{1,e}$.
\begin{figure}[ht]
\centerline{\includegraphics[scale=0.8]{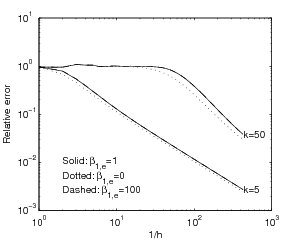}}
%\vspace{0.5cm}
\caption{Relative error in the $H^1$-seminorm of the IPDG solution with parameters
$\ga_{0,e}=1, \ga_{1,e}=0.1$, and each of the following $\be_{1,e}$:
$\be_{1,e}=0$ (dotted), $\be_{1,e}=1$ (solid), and $\be_{1,e}=100$ (dashed)
for $k=5$ and $k=50$, respectively. }\label{f9a}
\end{figure}

Finally, we examine the sensitivity of the error in $\ga_{1,e}$.
To the end, we fix $\ga_{0,e}=1$ and $\be_{1,e}=1$ and compute the IPDG
solution with the following $\ga_{1,e}$:
$\ga_{1,e}=0, 0.01, 0.1, 1$ for $k=5$ and $k=50$, respectively.
Figure~\ref{f10} plots the relative error in the $H^1$-seminorm
of the IPDG solution for each run. We observe that the error has
a similar behavior as the error of the
finite element solution for small value $\ga_{1,e}=0, 0.01$
(cf. Figure~\ref{f13}) and the solution is more stable for
larger $\ga_{1,e}$, but a large $\ga_{1,e}$, say $\ga_{1,e}=1$,
may result in a large (absolute) error.
\begin{figure}[ht]
\centerline{\includegraphics[scale=0.8]{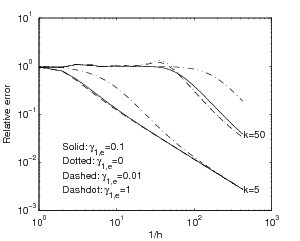}}
%\vspace{0.5cm}
\caption{Relative error in the $H^1$-seminorm of the IPDG solution with parameters
$\ga_{0,e}=1, \be_{1,e}=1$, and each of the following $\ga_{1,e}$:
$\ga_{1,e}=0.1$ (solid), $\ga_{1,e}=0$ (dotted),  $\ga_{1,e}=0.01$ (dashed),
and $\ga_{1,e}=1$ (dashdot) for $k=5$ and $k=50$, respectively. }\label{f10}
\end{figure}
%%%%
\subsection{Reduction of the pollution effect}\label{sec-6.5}
One advantage of the IPDG method is that it contains several parameters
which can be tuned for a particular purpose. In \cite{aldr06}, it is
shown that it is possible to reduce the pollution error of the IPDG method
by choosing appropriate parameters $\si$ and $\i\ga_{0,e}$. Recall that
all choice of $\si$ but one lead to non-symmetric formulations. In this
subsection, we shall show that appropriate choice of the parameter
$\ga_{1,e}$ can significantly reduce the pollution error of the symmetric
IPDG method (with $\si=1$). We use the following parameters:
\begin{equation}\label{e6.7}
    \i\ga_{1,e}=-0.07+0.01\i, \quad \ga_{0,e}=100, \quad\text{ and }\be_{1,e}=1.
\end{equation}
We remark that $\i\ga_{1,e}$ is simply chosen from the set
$\set{0.01(p+q\i), -50\le p,q\le 50}$ to minimize the relative error of the
IPDG solution in $H^1$-seminorm with $\ga_{0,e}=100$ and $\be_{1,e}=1$ for
wave number $k=50$ and mesh size $h=1/20.$

In Figure~\ref{f11}, the relative error of the IPDG solution with parameters
given by \eqref{e6.7} and the relative error of the finite element interpolation
are displayed in one plot. The IPDG method with  parameters given
by \eqref{e6.7} is much better than the IPDG method using  parameters given
by \eqref{e6.4} (cf. Figure~\ref{f5}). The relative error does neither
increase nor decrease significantly with the change of $k$ along line
$k h=0.25$ for $k\le 100$. But this does not mean that the pollution error
has been eliminated.
\begin{figure}[ht]
\centerline{\includegraphics[scale=0.8]{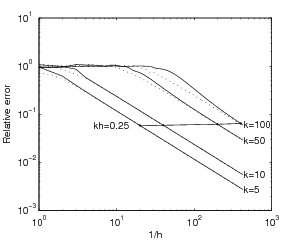}}
%\vspace{0.5cm}
\caption{Relative error of the IPDG solution  with parameters given
by \eqref{e6.7} (solid) and relative error of the finite element
interpolation (dotted) in $H^1$-seminorm for $k=5, k=10, k=50,$
and $k=100$, respectively. }\label{f11}
\end{figure}
For more detailed observation, the relative errors of the IPDG solution
with parameters given by \eqref{e6.7}, computed for all
integer $k$ from $1$ to $230$  for $k h=1$ and $k h=0.5$, are plotted in
Figure~\ref{f12}. It is shown that the pollution error is
reduced significantly (cf. Figure~\ref{f6} and Figure~\ref{f4}).
\begin{figure}[ht]
\centerline{\includegraphics[scale=0.8]{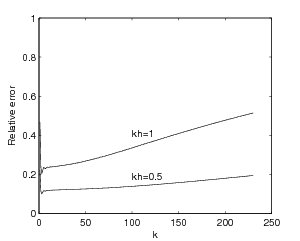}}
%\vspace{0.5cm}
\caption{Relative errors of the IPDG solution with parameters given
by \eqref{e6.7} in $H^1$-seminorm computed for $k=1, \cdots, 230$
with mesh size $h$ determined by $kh=1$ and $kh=0.5$, respectively.}\label{f12}
\end{figure}

%%%%%
\subsection{Comparison between the IPDG solution and the finite element solution}
We have shown the flexibility and performance of the IPDG method in previous
subsections. In this subsection, we give a comparison between the IPDG method
and the finite element method. One disadvantage of the IPDG method compared
to the finite element method is that the linear system of the IPDG
discretization involves more number of degrees of freedom than that of
finite element discretization on the same mesh. In two dimensional case
it is about three times more. So in the asymptotic range, the IPDG method
is less effective in terms of number of degrees of freedom.
 We shall show that, for Problem \eqref{e6.1}--\eqref{e6.2}, the
IPDG solution is more stable than the finite solution for large $h$,
and it is  possible to choose appropriate parameters such that the
IPDG method is more effective than the finite element method in preasymptotic
range even in terms of number of degrees of freedom.

In Figure~\ref{f13}, the relative error of the finite element solution
and the relative error of the finite element interpolation are displayed
in one plot. The relative error of the finite element solution first
oscillates around $100\%$, then decays at a rate greater than $-1$ in
the log-log scale but converges as fast as the finite element interpolation
(with slope $-1$) for small $h$. The relative error grows with $k$ along
line $k h=0.25.$ The error of the finite element solution is not controlled by
the magnitude of $k h$ --- see also Figure~\ref{f14}.
\begin{figure}[ht]
\centerline{\includegraphics[scale=0.8]{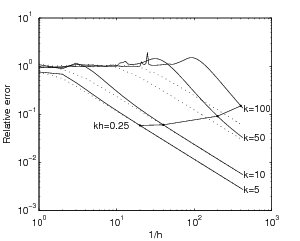}}
%\vspace{0.5cm}
\caption{Relative error of the finite element solution  (solid) and relative
error of the finite element interpolation (dotted) in $H^1$-seminorm
for $k=5, k=10, k=50,$ and $k=100$, respectively.}\label{f13}
\end{figure}

\begin{figure}[ht]
\centerline{\includegraphics[scale=0.8]{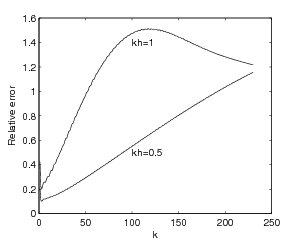}}
%\vspace{0.5cm}
\caption{Relative errors of the finite element solutions in $H^1$-seminorm
computed for $k=1, \cdots, 230$ with mesh size $h$ determined by $kh=1$
and $kh=0.5$, respectively.}\label{f14}
\end{figure}

Figure~\ref{f15} plots the reciprocal of the critical mesh size for the
finite element solution computed for all $k$ from $1$ to $230$ and the
curve  $m=\sqrt{k^3/48}$. It is shown that
\begin{equation}\label{emeshsizefem}
\text{ the critical mesh size for } u_h^{\rm FEM}\approx \sqrt{\frac{48}{k^3}}.
\end{equation}

\begin{figure}[ht]
\centerline{\includegraphics[scale=0.8]{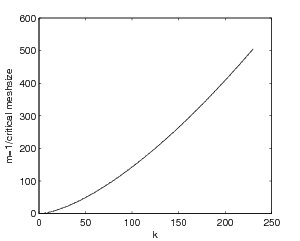}}
%\vspace{0.5cm}
\caption{Reciprocal of the critical mesh size of the relative error of the
finite element solution in $H^1$-seminorm computed for $k=1, \cdots, 230$.
The dotted line gives the curve  $ m=\sqrt{k^3/48}$.}\label{f15}
\end{figure}

We can see that the IPDG solution is more stable than  the finite element
solution.  For more detailed comparison, we consider the problem
\eqref{e6.1}--\eqref{e6.2} with wave number $k=100$.
The traces of the IPDG solutions with parameters given by \eqref{e6.7}
and the finite element solutions in the $xz$-plane for mesh sizes
$h=1/50, 1/120$, and $1/200$, and the trace of the exact
solution in the $xz$-plane, are plotted in Figure~\ref{f16}. The shape
of the IPDG solution is roughly same as that of the exact solution
for $h=1/50,$. They matches very well for $h=1/120$ and even
better for $h=1/200$. While the finite element solution has a wrong
shape near the origin for $h=1/50$ and $h=1/120$ and only has a correct
shape for $h=1/200$. The phase error appears in all the three cases for
the finite element solution.
\begin{figure}[ht]
\centerline{\includegraphics[scale=0.25]{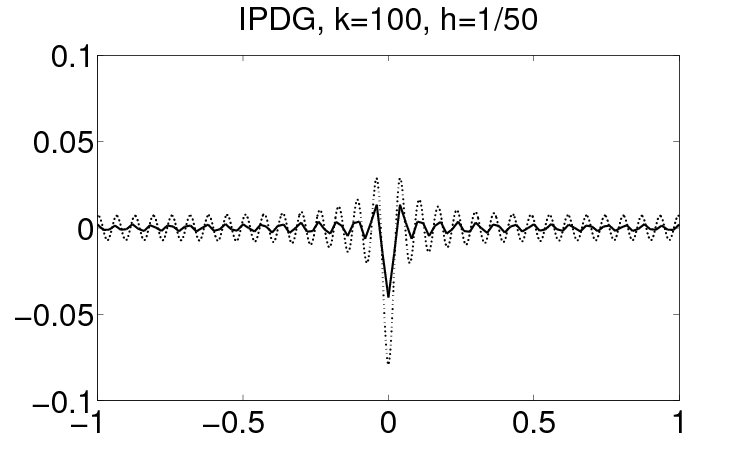}\includegraphics[scale=0.25]{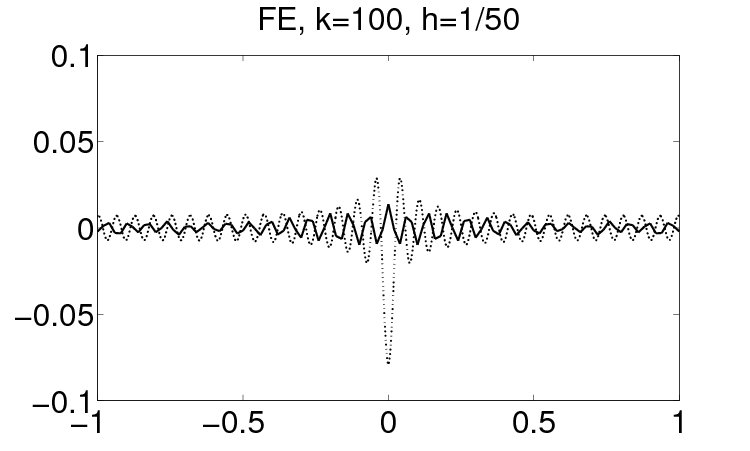}}
\centerline{\includegraphics[scale=0.25]{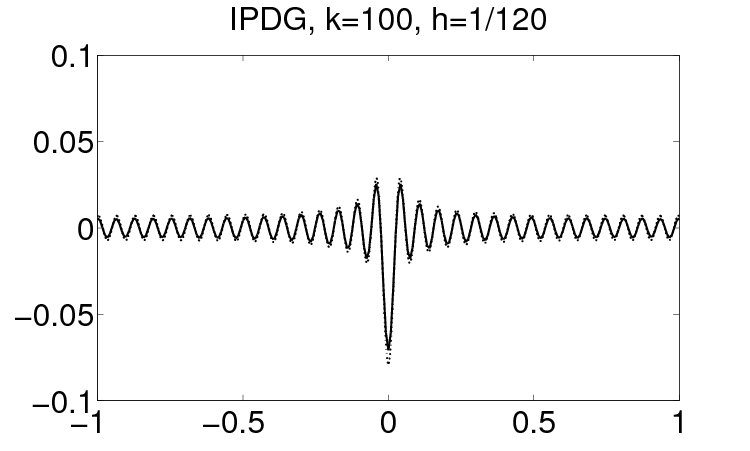}\includegraphics[scale=0.25]{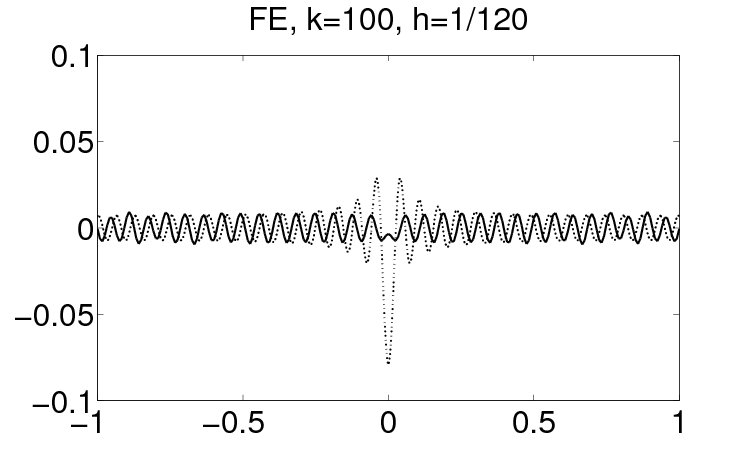}}
\centerline{\includegraphics[scale=0.25]{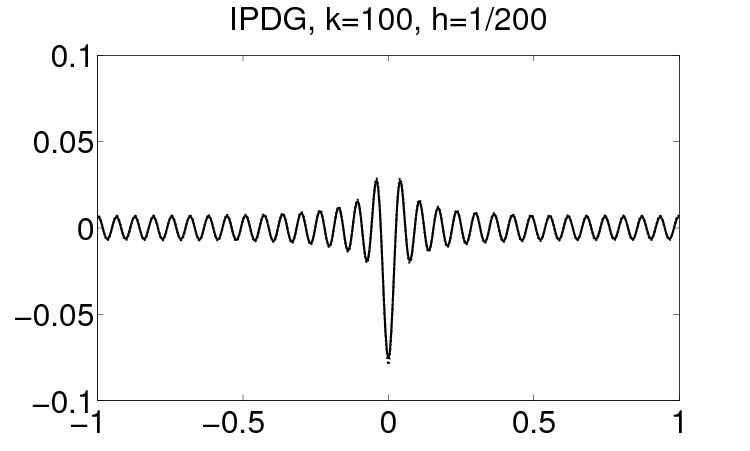}\includegraphics[scale=0.25]{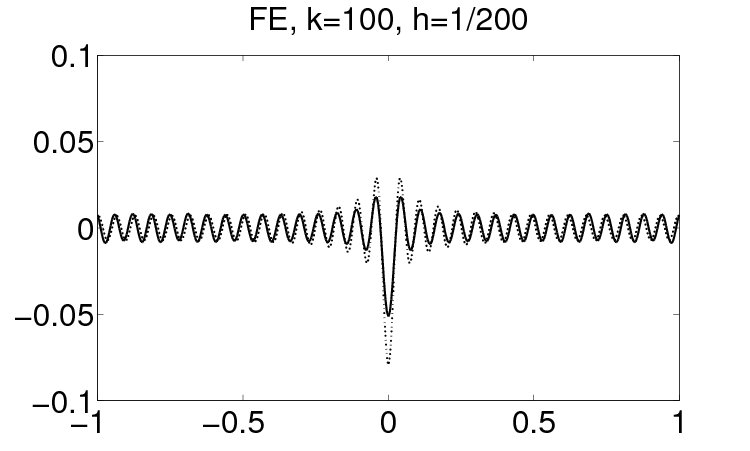}}
%\vspace{0.5cm}
\caption{The traces of the IPDG solutions (left) with parameters given by
\eqref{e6.7}  and the finite element solutions (right) in the $xz$-plane
for $k=100$ and mesh sizes $h=1/50, 1/120$, and $1/200$, respectively.
The dotted lines give the trace of the exact solution in  the $xz$-plane.}
\label{f16}
\end{figure}

Table~\ref{tab1} shows the numbers of degrees of freedom needed for $30$\%
relative errors in $H^1$-seminorm for the finite element  interpolation, the
IPDG solution with parameters given by \eqref{e6.7}, and the finite element
solution, respectively.  The finite element method needs less DOFs when $k=10$
and $k=50$ than the IPDG method does, but the situation reverses when
$k=100$ and $k=200$.
\begin{table}[ht]
\centering
\begin{tabular}{|l|c|c|c|c|}
  \hline
    % after \\: \hline or \cline{col1-col2} \cline{col3-col4} ...
    k & 10 & 50 & 100 & 200 \\\hline
    Interpolation & 217 (1/8)& 5,167 (1/41) & 20,419 (1/82) & 81,181 (1/164) \\\hline
    IPDG & 1,152 (1/8)& 38,088 (1/46) & 217,800 (1/110) & 1,431,432 (1/282) \\\hline
    FEM & 397 (1/11)& 30,301 (1/100) & 229,357 (1/276) & 1,804,201 (1/775) \\\hline
  \end{tabular}
\caption{Numbers of degrees of freedom needed for 
30\% relative errors
in $H^1$-seminorm for the finite element interpolation, the IPDG solution
with parameters given by \eqref{e6.7}, and the finite element solution,
respectively. The fractions in the parentheses give the corresponding
mesh sizes.}\label{tab1}
 \end{table}

%%%%%%%%%%%%%%

\end{document}